\newtheorem{thm}{Theorem}[section]
\newtheorem{prop}[thm]{Proposition}
\newtheorem{lem}[thm]{Lemma}
\newtheorem{cor}[thm]{Corollary}
\newtheorem*{thmA}{Theorem A}
\newtheorem*{thmB}{Theorem B}
\newtheorem*{thmC}{Theorem C}
\theoremstyle{definition}
\newtheorem{dfn}[thm]{Definition}
\def\Ac{\mathcal{A}} \def\Bc{\mathcal{B}} 
\def\C{\mathbb{C}}  \def\Fc{\mathcal{F}}
 \def\al{\alpha}
  \def\Mc{\mathcal{M}}
    \def\Kc{\mathcal{K}}
\newcommand{\lam}{\mathcal{L}}
\newcommand{\lc}{\mathcal{LC}}
\newcommand{\pr}{\mathrm{Pr}}
\newcommand{\I}{\mathrm{I}}
\newcommand{\kr}{\mathrm{KR}}
\newcommand{\g}{\mathbf{g}}
\def\R{\mathbb{R}}
\def\wt{\widetilde}
\def\Uc{\mathcal{U}} \def\Vc{\mathcal{V}} 
\def\Z{\mathbb{Z}}  
\renewcommand\emptyset{\varnothing}
\newcommand{\sm}{\setminus}
\def\eps{\varepsilon}
\def\La{\Lambda}
\def\al{\alpha}
\def\be{\beta}
\def\da{\delta}
\def\ga{\gamma}
\def\Ga{\Gamma}
\def\Si{\Sigma}
\def\ta{\theta}
\def\om{\omega}
\def\la{\lambda}
\def\si{\sigma}
\def\vp{\varphi}
\def\ol{\overline}
\def\ar{\mathrm{Ar}}
\def\Re{\mathrm{Re}}
\def\cu{\mathrm{CU}}
\def\uc{\mathbb{S}^1}
\def\bd{\mathrm{Bd}}
\def\le{\leqslant}
\def\ge{\geqslant}
\def\0{\emptyset}
\def\disk{\mathbb{D}}
\def\phd{\mathrm{PHD}}
\def\ope{\ol{\phd}^e_3}
\def\ch{\mathrm{CH}}
\begin{document}
\date{May 24, 2013}
\title[The Main Cubioid]
{The Main Cubioid}

\author[A.~Blokh]{Alexander~Blokh}

\thanks{The first and the third named authors were partially
supported by NSF grant DMS--1201450}

\author[L.~Oversteegen]{Lex Oversteegen}

\thanks{The second named author was partially  supported
by NSF grant DMS-0906316}

\author[R.~Ptacek]{Ross~Ptacek}

\author[V.~Timorin]{Vladlen~Timorin}

\thanks{The fourth named author was partially supported by
the Dynasty Foundation fellowship, the Simons-IUM fellowship, RFBR grants
11-01-00654-a, 12-01-33020, and AG Laboratory NRU-HSE, MESRF grant ag. 11.G34.31.0023}

\address[Alexander~Blokh, Lex~Oversteegen and Ross~Ptacek]
{Department of Mathematics\\ University of Alabama at Birmingham\\
Birmingham, AL 35294}

\address[Vladlen~Timorin]
{Faculty of Mathematics\\
Laboratory of Algebraic Geometry and its Applications\\
Higher School of Economics\\
Vavilova St. 7, 112312 Moscow, Russia }

\address[Vladlen~Timorin]
{Independent University of Moscow\\
Bolshoy Vlasyevskiy Pereulok 11, 119002 Moscow, Russia}

\email[Alexander~Blokh]{ablokh@math.uab.edu}
\email[Lex~Oversteegen]{overstee@math.uab.edu}
\email[Ross~Ptacek]{rptacek@uab.edu}
\email[Vladlen~Timorin]{vtimorin@hse.ru}

\subjclass[2010]{Primary 37F45; Secondary 37F10, 37F20, 37F50}

\keywords{Complex dynamics; Julia set; connectedness locus; laminations}

%\definecolor{Red}{rgb}{1,0,0}

\begin{abstract}
We discuss different analogs of the main cardioid in the parameter
space of cubic polynomials, and establish relationships between
them.
\end{abstract}

\maketitle

\section{Introduction}
For a complex polynomial $f$, let $J(f)$ be its \emph{Julia set}
and $K(f)$ be its \emph{filled Julia set}. By \emph{classes} of
degree $d$ polynomials, we mean \emph{affine conjugacy classes}.
Denote by $[f]$ the class of a polynomial $f$.

The degree $d$ \emph{connectedness locus} $\Mc_d$ is the set of
classes of degree $d$ polynomials $f$ with connected $K(f)$
(equivalently, $[f]\in\Mc_d$ if all critical points of $f$ belong to $K(f)$). Thus,
the set $\Mc_2$ is identified with the \emph{Mandelbrot set}
consisting of all complex parameters $c$ such that the orbit of $0$
under the polynomial $z^2+c$ is bounded. The central part of the
Mandelbrot set, the \emph{Principal Hyperbolic Domain} $\phd_2$,
consists of all parameter values $c$ such that the polynomial
$z^2+c$ is hyperbolic, and the set $K(z^2+c)$ is a Jordan disk (a
polynomial of any degree is said to be {\em hyperbolic} if the
orbits of all its critical points converge to attracting cycles).
The boundary of $\phd_2$ is called the \emph{Main Cardioid}.

In degree $d$, the Principal Hyperbolic Domain $\phd_d$ of $\Mc_d$
is the hyperbolic component of $\Mc_d$ consisting of classes $[f]$
such that $K(f)$ is a Jordan disk. Equivalently, the class $[f]$ of
a degree $d$ polynomial $f$ belongs to $\phd_d$ if all critical
points of $f$ are in the immediate attracting basin of the same
attracting (or super-attracting) fixed point. Theorem A lists
necessary conditions for $[f]$ to belong to $\ol{\phd}_d$ (a point
$b\in B$ of a topological space $B$ is called a \emph{cutpoint} of
$B$ if $B\sm \{b\}$ is disconnected).
%so that if $B$ is disconnected, every
%point of $B$ is a cutpoint of $B$).

\begin{dfn}[Main Cubioid]\label{d:maincu} The Main Cubioid is the set
$\cu$ of classes of cubic polynomials $f$ with connected $J(f)$ such
that:
\begin{enumerate}
\item
the polynomial $f$ has at least one non-repelling fixed point,
\item
it has no repelling periodic cutpoints of $J(f)$, and
\item all non-repelling periodic points but perhaps
one fixed point have multiplier 1.
\end{enumerate}
\end{dfn}

\begin{thmA}
Let $f$ be a polynomial, whose class belongs to $\ol{\phd}_d$. Then
$f$ has a fixed non-repelling point and no repelling periodic
cutpoints of the Julia set of $f$. Moreover, all non-repelling
periodic points but perhaps one fixed point have multiplier 1. Thus,
$\ol{\phd}_3\subset\cu$.
\end{thmA}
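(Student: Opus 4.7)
The plan is to verify the three conditions of Definition~\ref{d:maincu} in order, working with a representative $f_0$ whose class lies in $\ol{\phd}_d$ together with a sequence of polynomials $f_n\to f_0$ with $[f_n]\in\phd_d$. Each $f_n$ is hyperbolic with a single attracting fixed point $\zeta_n$ in whose immediate basin every critical point sits; the Julia set $J(f_n)$ is a Jordan curve, and by the Fatou bound on non-repelling cycles $\{\zeta_n\}$ is the only non-repelling cycle of $f_n$. Also, since $\Mc_d$ is closed and $\phd_d\subset\Mc_d$, we have $[f_0]\in\Mc_d$ and hence $J(f_0)$ is connected.

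Condition~(1) is immediate from continuity of fixed points: up to subsequence $\zeta_n\to\zeta_0$, so $f_0(\zeta_0)=\zeta_0$ and $|f_0'(\zeta_0)|=\lim|f_n'(\zeta_n)|\le1$, meaning $\zeta_0$ is a non-repelling fixed point. For condition~(2), suppose a point $p$ were a repelling periodic cutpoint of $J(f_0)$. A standard fact for connected polynomial Julia sets is that a periodic cutpoint at a repelling point is the landing point of at least two external rays. By Douady--Hubbard stability, repelling periodic points move holomorphically with the parameter throughout $\Mc_d$ and the rays landing at them persist; hence the same two rays would land at the perturbation $p_n\in J(f_n)$ for large $n$, contradicting the fact that $J(f_n)$ is a Jordan curve and thus admits at most one ray per point.

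For condition~(3), let $C$ be a non-repelling cycle of $f_0$ of period $p$ with multiplier $\lambda\neq 1$, and assume $C\neq\{\zeta_0\}$. Because $\lambda\neq 1$, the implicit function theorem applied to $f^p-\mathrm{id}$ furnishes a neighborhood $U$ of $f_0$ in parameter space on which $C$ persists to a period-$p$ cycle $C(f)$ with holomorphic multiplier $\lambda(f)$ and $\lambda(f_0)=\lambda$. For every $f\in U$ with $[f]\in\phd_d$, the Fatou bound forces $C(f)$ to be repelling, since $\{\zeta(f)\}$ absorbs every critical point; hence $|\lambda(f)|>1$ there. Passing to the limit along $f_n\to f_0$ inside $\phd_d$ gives $|\lambda|=1$, leaving only the possibility $\lambda=e^{2\pi is}$ with $s\neq 0$.

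The main obstacle is ruling out this residual case of a neutral cycle $C\neq\{\zeta_0\}$ whose multiplier is a non-trivial point of the unit circle. The plan is to exploit the rotation number attached to $C$: for each $f_n\in\phd_d$ approaching $f_0$ the repelling perturbation $C(f_n)$ carries a combinatorial rotation number recorded in the cyclic action of $f_n^p$ on the external angles of the rays landing at its points, and as $f_n\to f_0$ that combinatorics forces several such rays to collapse onto one point of $C$ in $f_0$. The resulting point is then a repelling periodic cutpoint of $J(f_0)$, contradicting condition~(2) already established. Hence $\lambda=1$ and condition~(3) holds. Combining (1)--(3) gives $[f_0]\in\cu$, so $\ol{\phd}_d\subset\cu$; specializing to $d=3$ yields $\ol{\phd}_3\subset\cu$ as claimed.
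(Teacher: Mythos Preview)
Your arguments for conditions~(1) and~(2) are essentially those of the paper: limits of attracting fixed points are non-repelling, and stability of rays at repelling periodic points (Lemma~\ref{l:rep}) forces a repelling cutpoint of $f_0$ to persist into the Jordan-curve Julia sets of nearby $f_n\in\phd_d$, which is impossible.

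The gap is in your treatment of condition~(3). Your final paragraph does not yield a contradiction. For $f_n\in\phd_d$ the Julia set is a Jordan curve, so exactly \emph{one} external ray lands at each point of the repelling cycle $C(f_n)$; the combinatorial rotation number of $f_n^{p}$ at each such point is therefore $0$. There are no ``several such rays'' available to collapse, and nothing in your argument explains why any ray would land at a point of $C$ in $f_0$ at all, since Lemma~\ref{l:rep} applies to repelling limits whereas $C$ is neutral. Even granting that some rays landed together at a point of $C$, that point would be a \emph{neutral} periodic cutpoint of $J(f_0)$, not a repelling one, so you would not contradict condition~(2).

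What the paper actually does is pass to an iterate so that $\alpha$ and $\beta$ are both \emph{fixed}, and then invoke the Yoccoz inequality: if repelling fixed points $\alpha_n$ of $f_n$ converge to a neutral fixed point with multiplier $e^{2\pi i\rho}$, then the combinatorial rotation numbers at $\alpha_n$ must converge to $\rho$. Since $J(f_n)$ is a Jordan curve, those rotation numbers are all $0$, forcing $\rho=0$, i.e.\ multiplier~$1$. Hence any non-repelling fixed point of $f_0$ with multiplier $\ne 1$ must be the limit of the attracting fixed points of the $f_n$ (Lemma~\ref{l:lim-attr}); but only one fixed point can play that role, giving the contradiction. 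Your sketch gestures at rotation numbers but omits the Yoccoz inequality, which is the mechanism linking the analytic multiplier to the combinatorial data and is indispensable here.
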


If $[f]\in \ol{\phd}_d$ then $f$ cannot have two attracting periodic
points as otherwise any small perturbation of $f$ will have two
periodic attracting points while there exist polynomials with
classes from $\phd_d$ (and hence with only one periodic attracting
point) arbitrarily close to $f$. A part of Theorem A extends this
observation to non-repelling periodic points.

The name of $\cu$ suggests similarity with the Main Cardioid with
one difference: the quadratic analog of the Main
Cubioid is the \emph{closure} of the Principal Hyperbolic Domain
$\phd_2$ rather than the boundary of it. Observe that, by
definition, if $J(f)$ is disconnected, then $[f]\not\in\cu$. Observe
also that by the Fatou-Shishikura inequality \cite{fat20, shi87}, a
cubic polynomial $f$ has at most two non-repelling cycles.

As in \cite{BOPT-QL},
we now define the \emph{extended} closure $\ol\phd_3^e$ of the
principal hyperbolic domain. To do so we first need to define a
holomorphic motion. Let $\Lambda$ be a Riemann surface, and
$Z\subset\C$ any (!) subset. A {\em holomorphic motion} of the set
$Z$ is a map $\mu:Z\times \Lambda\to\C$ with the following
properties:
\begin{itemize}
  \item for every $z\in Z$, the map $\mu(z,\cdot):z\times \Lambda\to\C$ is
  holomorphic;
  \item for $z\ne z'$ and every $\nu\in\Lambda$, we have $\mu(z,\nu)\ne \mu(z',\nu)$;
  \item there is a point $\nu_0$ such that $\mu(z,\nu_0)=z$ for all $z\in Z$.
\end{itemize}
A crucial result about holomorphic motions is the
\emph{$\lambda$-lemma} of Ma\~n\'e, Sad and Sullivan \cite{MSS}:
{\em a holomorphic motion of a set $Z$ extends to a unique
holomorphic motion of the closure $\ol Z$; moreover, this extension
is a continuous function in two variables}. Suppose that for each
$\nu\in \La$ a map $h_\nu:Z\to \C$ is given. A holomorphic motion
$\mu:Z\times\Lambda\to\C$ is called \emph{equivariant $($with respect
to the family of maps $h_\nu$$)$} if for every $\nu\in\Lambda$ and
every $z\in Z$ with $h_{\nu_0}(z)\in Z$ we have
$h_\nu(\mu(z,\nu))=\mu(h_{\nu_0}(z),\nu)$.

Let $\Fc_\la$ be the space of all cubic polynomials of the form
$$
f_{\la,b}(z)=\la z+bz^2+z^3,\quad b\in \C.
$$
A polynomial $f\in \Fc_\la$ is called \emph{stable with respect to
$0$} if its Julia set admits an equivariant holomorphic motion over
some neighborhood of $f$ in $\Fc_\la$. The \emph{($\la$-)stable} set
$\mathcal{S}_\la\subset \C$ is the set of all $b\in \C$ such that
$f_{\la, b}$ is stable with respect to $0$. A \emph{($\la$-)stable
domain} is a component $\La$ of $\mathcal{S}_\la$. Clearly, for any
$b_1, b_2$ in a $\la$-stable component $\La$ there is an equivariant
holomorphic motion from $J(f_{\la, b_1})$ to $J(f_{\la, b_2})$. A
polynomial $g$ is said to be \emph{stable} if $g\in [f]$ where $f\in
\Fc_\la$ is stable with respect to $0$.

\begin{dfn}\label{d:exte}
The set $\ol\phd_3^e$ is the union of $\ol\phd_3$ and all
$\la$-domains of stability $\La$ with $|\la|\le 1$ such that classes
of polynomials from $\bd(\La)$ belong to $\ol\phd_3$.
%the set of classes of polynomials
%$f_{\la, b}\in\Fc_\lambda, |\la|\le 1$ with the following
%properties:
%\begin{itemize}
%\item the polynomial $f=f_{\la,b_*}$ is stable with respect to $0$,
% and
%\item if $b^*\in \Lambda$ where $\Lambda$ is a component of the set of all $b\in\C$
%such that $f_{\la,b}$ is stable with respect to $0$, then for all
%$b'\in \bd(\Lambda)$ we have $[f_{\la,b'}]\in\ol\phd_3$.
%\end{itemize}
\end{dfn}

We conjecture that $\ol{\phd}_3=\cu=\ol\phd_3^e$. To support this
conjecture, we prove Theorem B. Let $\lc$ be the set of classes of
all polynomials with locally connected Julia set.

\begin{thmB}
We have $\ol\phd^e_3\subset \cu$ and
$\lc\cap \cu=\lc\cap \ol\phd^e_3$.
\end{thmB}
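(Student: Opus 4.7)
The plan is to prove the two assertions of Theorem B in succession: first the inclusion $\ope\subset\cu$, and then the equality $\lc\cap\cu=\lc\cap\ope$, which in view of the first reduces to the non-trivial inclusion $\lc\cap\cu\subset\lc\cap\ope$.

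For $\ope\subset\cu$, Theorem A already gives $\op\subset\cu$, so it remains to treat a $\la$-domain of stability $\La$ with $|\la|\le 1$ and $\bd(\La)\subset\op$. Fix $b\in\La$ and set $f=f_{\la,b}$. Condition (1) of Definition~\ref{d:maincu} is automatic, since $f(0)=0$ and $f'(0)=\la$ with $|\la|\le 1$. For condition (2), I would argue by contradiction: a repelling periodic cutpoint of $J(f)$ can be followed holomorphically over $\La$ via the equivariant holomorphic motion together with the $\la$-lemma, and the cutpoint property is preserved by the motion. Approaching a boundary point $b_0\in\bd(\La)$ at which the multiplier of the cycle remains strictly greater than $1$ in modulus (such points exist because parameters where the multiplier equals $1$ form a thin subset of $\bd(\La)$), one obtains a polynomial $f_{\la,b_0}$ with a repelling periodic cutpoint, contradicting $[f_{\la,b_0}]\in\op\subset\cu$. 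For condition (3), the Fatou--Shishikura inequality bounds the number of non-repelling cycles of a cubic polynomial by $2$. One of them is the fixed point at $0$, and any additional non-repelling cycle has multiplier $\mu(b)$ that is a bounded holomorphic function on $\La$; since $\bd(\La)\subset\op\subset\cu$, this multiplier equals $1$ on $\bd(\La)$, and a maximum-principle argument (using that $\bd(\La)\subset\Mc_3$ keeps $\La$ relatively compact) forces $\mu\equiv 1$ on $\La$.

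For the inclusion $\lc\cap\cu\subset\lc\cap\ope$, local connectivity of $J(f)$ would give access to combinatorial models for $f$: an invariant lamination on $\disk$ encoding $J(f)$ together with a topological model of the dynamics. The three defining conditions of $\cu$ translate into precise combinatorial constraints on these models---a distinguished fixed leaf or gap corresponding to the non-repelling fixed point, the absence of branching configurations producing repelling periodic cutpoints, and a multiplier-rigidity constraint for any second non-repelling cycle. The plan is to argue case by case, according to the dynamical type of the non-repelling fixed point (attracting, parabolic, or irrationally indifferent), that either $[f]\in\op$ directly or $[f]$ lies in a $\la$-stable domain $\La$ with $\bd(\La)\subset\op$; in each case the required witness for $[f]\in\ope$ is thereby produced.

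The principal obstacle is the irrationally indifferent case, in which one must actually exhibit a $\la$-stable component containing $[f]$ whose boundary is contained in $\op$. This step combines the combinatorial rigidity provided by local connectivity with a detailed analysis of parameter slices $\Fc_\la$ and of quasi-conformal deformations, in the spirit of \cite{BOPT-QL}. Cremer fixed points, where a priori combinatorial bounds are weaker, should be the most delicate sub-case.
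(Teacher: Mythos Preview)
Your outline for the first inclusion has the right shape, but both Property~(2) and Property~(3) contain real gaps. For Property~(2) you track the multiplier of the cycle $z_b$ to the boundary and claim it stays repelling off a thin set; but what must be controlled are the \emph{landing points of the external rays}, not the cycle itself. Even if $z_{b_0}$ is repelling, the rays $R_{f_{b_0}}(\al)$, $R_{f_{b_0}}(\be)$ could land at a parabolic point distinct from $z_{b_0}$, and the set where $|\mu(b_0)|=1$ (as opposed to $\mu(b_0)=1$) is not obviously thin in $\bd(\La)$. The paper tracks the rays instead: if both land at repelling points at $b'\in\bd(\La)$, Lemma~\ref{l:rep} forces a common landing point, contradicting Theorem~A; hence one ray lands at a parabolic point, and then either that point is not $0$ (finitely many $b'$) or it is $0$, and Proposition~\ref{p:ray0stable} forces $T_{p/q}(b')=0$ (finitely many roots). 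For Property~(3), your maximum-principle argument is not justified: even granting $|\mu|\le 1$ on $\La$, the boundary claim $\mu=1$ fails exactly where the cycle collides with $0$, and there $\mu$ is not the multiplier of a separate cycle. The paper again reduces each $b'\in\bd(\La)$ to one of finitely many conditions (either $z_{b'}\ne 0$ is parabolic of multiplier $1$, or $z_{b'}=0$ and an explicit computation with $f_b^{\circ n}(z)-z$ gives $T_{p/q}(b')=0$), contradicting the infinitude of $\bd(\La)$; no maximum principle is needed.

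For the second inclusion your plan omits the two essential ingredients. The paper does not split according to the dynamical type of $0$ nor exhibit a stable domain; it assumes $[f]\in\cu\sm\ope$ and derives a contradiction from (i)~the quadratic-like restriction of Theorem~\ref{t:mainql}, producing $f:U^*\to V^*$ hybrid equivalent to $z^2+c$ with $c\in\ol\phd_2$, and (ii)~the structure Theorem~\ref{t:cormin-spec} for cubioidal laminations. By Theorem~C the lamination $\lam_f=\lam_{\sim_f}$ is cubioidal; the classification then supplies a stand-alone quadratic invariant gap $U$ that $\lam_f$ tunes or weakly tunes, and a case analysis (periodic versus regular critical major, finite versus infinite critical gap) shows each configuration is incompatible with the quadratic-like filled Julia set $K^*$, for instance because a periodic cutpoint separating $K^*$ from $\om_2(f)$ would have to lie in $J^*$ yet cannot. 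Finally, the locally connected hypothesis rules out Cremer points entirely (Julia sets with Cremer points are never locally connected), so the case you flag as most delicate does not arise; the Siegel case is what remains, and it is disposed of via Lemma~\ref{l:nodtypes}.
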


To state Theorem C we need to combine \emph{rational laminations}
\cite{kiwi97} and laminations understood as \emph{equivalence
relations} on $\uc$ (see \cite{bl02, bco11}). We discuss the notion
of a lamination in great detail in Section~\ref{s:thmC}; here we
only give necessary definitions. Denote by $\disk\subset\C$ the open
unit disk of radius 1 centered at the origin, and by $\uc$ the
boundary of $\disk$.

\begin{dfn}[Laminations]\label{d:lam}
An equivalence relation $\sim$ on the unit circle $\uc$ is called a
\emph{lamination} if either $\uc$ is one $\sim$-class (such
laminations are called \emph{degenerate}), %or \emph{empty}),
or the following holds:

\noindent (E1) the graph of $\sim$ is a closed subset in $\uc \times
\uc$;

\noindent (E2) if $t_1\sim t_2\in \uc$ and $t_3\sim t_4\in \uc$, but
$t_2\not \sim t_3$, then the open straight line segments in $\C$
with endpoints $t_1, t_2$ and $t_3, t_4$ are disjoint;

\noindent (E3) each equivalence class of $\sim$ is totally
disconnected.
\end{dfn}

A lamination $\sim$ admits a \emph{canonical extension onto $\C$}:
its classes are either convex hulls of classes of
$\sim$, or points which do not belong to such convex hulls. By
Moore's Theorem the space $\C/\sim$ is homeomorphic to
$\C$. The quotient map $p_\sim:\uc\to \uc/\sim$ extends to the plane
with the only non-trivial point-preimages (\emph{fibers}) being the
convex hulls of $\sim$-classes. From now on we will always consider
such extensions of the quotient map.
We write $\sigma_d:\uc\to\uc$ for the map $z\mapsto z^d$.

\begin{dfn}[Laminations and dynamics]\label{d:si-inv-lam}
A lamination $\sim$ is called ($\si_d$-){\em invariant} if:

\noindent (D1) $\sim$ is {\em forward invariant}: for a $\sim$-class $\g$,
the set $\si_d(\g)$ is a $\sim$-class;

\noindent (D2) for any $\sim$-class $\g$, the map $\si_d: \g\to
\si_d(\g)$ extends to $\uc$ as an orientation preserving covering map
such that $\g$ is the full preimage of $\si_d(\g)$ under this covering
map.
\end{dfn}

For a $\si_d$-invariant lamination $\sim$ consider the
\emph{topological Julia set} $\uc/\sim=J_\sim$ and the
\emph{topological polynomial} $f_\sim:J_\sim\to J_\sim$ induced by
$\si_d$. One can extend $f_\sim$ to a branched-covering map
$f_\sim:\C\to \C$ of degree $d$ called a \emph{topological
polynomial} too.
The map $p_\sim$ semi-conjugates $\si_d$ with $f_\sim$, at least
on the unit circle and all leaves of $\sim$.
Unlike complex polynomials, topological polynomials
can have periodic critical points in their topological Julia sets.
The complement $K_\sim$ of the unique unbounded component
$U_\infty(J_\sim)$ of $\C\sm J_\sim$ is called the \emph{filled
topological Julia set}. For $a$, $b\in\uc$, let $\ol{ab}$ be the
{\em chord} with endpoints $a$ and $b$. If $A\subset \uc$ is closed,
boundary chords of the convex hull $\ch(A)$ of $A$ are called
\emph{edges} of $\ch(A)$.

\begin{dfn}[Leaves and gaps]\label{d:lea}
If $A$ is a $\sim$-class, call an edge $\ol{ab}$ of $\bd(\ch(A))$ a
\emph{leaf}. All points of $\uc$ are also called
(\emph{degenerate\emph{)} leaves}. The family $\lam_\sim$ of all
leaves of $\sim$ is called the \emph{geometric lamination {\rm
(}geo-lamination{\rm )} generated by $\sim$}. Let $\lam^+_\sim$ be
the union of all leaves of $\lam_\sim$. The closure of a non-empty
component of $\disk\sm \lam^+_\sim$ is called a \emph{gap} of
$\sim$. Leaves and gaps of $\lam_\sim$ are called
\emph{$\lam_\sim$-sets}; a leaf which is not an edge of a finite gap
is called \emph{independent}. If $G$ is a gap or leaf, we call the
set $G'=\uc\cap G$ the \emph{basis of $G$}.
\end{dfn}

Extend $\si_d$ (keeping the notation) linearly over all
\emph{individual chords} in $\ol{\disk}$ (e.g., over leaves of
$\lam_\sim$); even though the extended $\si_d$ is not well-defined
on the entire disk, it is well-defined on $\lam^+_\sim$. A gap or
leaf $U$ is said to be \emph{{\rm(}pre{\rm)}periodic} if
$\si_d^{m+k}(U')=\si_d^m(U')$ for some $m\ge 0$, $k>0$. If $m$ above
can be chosen to be $0$, then $U$ is called \emph{periodic}; the
minimal number $k$ above is called the \emph{period} of $U$. If $U$
is (pre)periodic but not periodic then it is called
\emph{preperiodic}. A \emph{Fatou} gap is a gap with infinite basis.
By \cite{kiw02} a Fatou gap $G$ is (pre)periodic under $\si_d$.

\begin{dfn}[Rotational sets and numbers]\label{d:rota}
If $\g$ is a periodic non-degenerate finite $\sim$-class of period
$n$, the map $\si_d^{\circ n}|_{\g}$ is conjugate (by a conjugacy
that preserves the cyclic order) to a rigid rotation $R_\rho$ by a rational
angle $\rho$ on a finite $R_\rho$-invariant subset of $\uc$. The
number $\rho$ is then called the \emph{rotation number of $\g$}. A
periodic Fatou gap $G$ of period $n$ such that
$f^{\circ n}_\sim|_{\bd(p_\sim(G))}$ is conjugate to an
irrational rotation by an angle $\rho$, is called a \emph{Siegel}
gap while $\rho$ is called the \emph{rotation number of $G$}.
Otherwise $f^{\circ n}_\sim|_{\bd(p_\sim(G))}$ is conjugate to a map
$\si_k$ with some $k>1$ and $G$ is called a \emph{Fatou gap of
degree $k$}. Siegel gaps and finite $\sim$-classes with non-zero
rotation number are called \emph{rotational sets}.
\end{dfn}

In Section~\ref{s:thmC} we combine ideas from \cite{kiwi97} and
\cite{bco11} and define the geo-lamination $\lam_f$ associated to
$f$. First we associate to $f$ a lamination $\sim_f$ such that the
monotone map from $J(f)$ onto $J_{\sim_f}$ is the finest monotone
map of $J(f)$ onto a locally connected continuum (a map is
\emph{monotone} if point-preimages --- \emph{fibers} --- are continua).
This is possible by \cite{bco11} (see Theorems~\ref{t:statproje} and
~\ref{t:dynproje} in Section~\ref{s:lamisec}).
%This gives rise to
%the geo-lamination $\lam_{\sim_f}$.
To specify $\lam_{\sim_f}$ we
need the following definition.

\begin{dfn}\label{d:cut}
A curve $\Gamma$ in the dynamic plane of $f$ consisting of dynamic
(periodic of period $k$) external rays $R_f(\ta_1)$, $R_f(\ta_2)$
and their common landing point $x$ is called a (periodic of
period $k$) \emph{cut} while $x$ is called the \emph{vertex} of $\Gamma$.
\end{dfn}

Finally, we add to $\lam_{\sim_f}$ leaves and finite gaps
corresponding to periodic cuts of the fibers associated to infinite
classes of $\sim_f$. It was shown in \cite{bco11} that a fiber
contains at most finitely many vertices of periodic cuts. We also
add the corresponding pullbacks of the added leaves and gaps. We
make a distinction between different kinds of gaps of $\lam_f$
depending on what they correspond to in terms of $f$. Thus, to each
polynomial we associate a geo-lamination $\lam_f$ equipped with an
additional structure: for each gap or leaf of $\lam_f$, we know if
it is contained in the convex hull of a $\sim_f$-class or not.

\begin{dfn}\label{d:lampair} A \emph{laminational pair} is a pair
$\{\sim, \lam\}$ where $\lam\supset \lam_\sim$ is a geo-lamination
obtained by adding to $\lam_\sim$ finitely many finite periodic gaps
or leaves inside convex hulls of infinite $\sim$-classes as well as
all their preimages so that $\lam$ is a geo-lamination.
\end{dfn}

Now we can define the Combinatorial Main Cubioid.

\begin{comment}

\begin{dfn}\label{d:cubioid}
A cubic laminational pair $\{\sim, \lam\}$ is \emph{cubioidal} if
$\lam$ has at most one rotational set and each periodic
non-degenerate leaf of $\lam$ has an attached to it Fatou gap whose
basis is not contained in one $\sim$-class. The {\em Combinatorial
Main Cubioid} $\cu^c$ is the set of all cubic geo-laminations from
cubioidal laminational pairs (they are called
\emph{$\cu$-laminations}).
\end{dfn}

\end{comment}

\begin{dfn}\label{d:cubioid}
A cubic lamination $\sim$ (and its geo-lamination $\lam_\sim$)
is \emph{cubioidal} if each periodic
non-degenerate leaf of $\sim$ has an attached to it Fatou gap whose
basis is not one $\sim$-class, and
$\sim$ has at most one rotational set. A
laminational pair $\{\sim, \lam\}$ is \emph{cubioidal} if
$\sim$ is cubioidal and
$\lam_\sim=\lam$. The {\em Combinatorial
Main Cubioid} $\cu^c$ is the set of all cubioidal geo-laminations
(they are called
\emph{$\cu$-laminations}).
\end{dfn}

%Equivalently, one can replace (1) and (2) by saying that
%each periodic
%non-degenerate leaf of $\lam$ has an attached to it Fatou gap whose
%basis is not \emph{contained in} one $\sim$-class.

There are two extreme cases for $\{\sim_f, \lam_f\}$. First,
$\sim_f$ may identify no two points. Then $J(f)$ is a Jordan curve,
$\lam_f$ has no leaves, and $[f]\in \cu$. We call such laminational
pair \emph{empty}. Second, $\sim_f$ may identify all points of
$\uc$ while $\lam_f$ contains no leaves. By our
Lemma~\ref{l:emptylam} then again $[f]\in \cu$. We call such
laminational pair \emph{degenerate}. The degenerate and the empty
laminational pairs share the same geo-lamination, are cubioidal, and correspond to
polynomials $f$ with $[f]\in \cu$, yet correspond to two very
different types of dynamics. In all other cases $\lam_f$ includes
some non-degenerate leaves.

\begin{thmC}
If %$f$ is a cubic polynomial and
$[f]\in\cu$ then
%$\lam_{\sim_f}=\lam_f$ and
$(\sim_f, \lam_f)$ is a cubioidal
laminational pair.
%Let $f$ be a cubic polynomial. Then the following holds: $(1)$ if
%$[f]\in\cu$ then $\lam_{\sim_f}=\lam_f$ and $(\sim_f, \lam_f)$ is a
%cubioidal laminational pair; $(2)$ if $(\sim_f, \lam_f)$ is a
%cubioidal laminational pair and $f$ has at most one periodic
%attracting point then $[f]\in \cu$.
\end{thmC}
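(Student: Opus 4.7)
The statement requires us to verify both that $\sim_f$ is cubioidal---meaning (I.a) every periodic non-degenerate leaf of $\sim_f$ bounds a Fatou gap whose basis is not a single $\sim_f$-class, and (I.b) $\sim_f$ supports at most one rotational set---and that $\lam_f=\lam_{\sim_f}$, so the construction of $\lam_f$ adds nothing to $\lam_{\sim_f}$. The entire argument rests on conditions~(2) and~(3) of Definition~\ref{d:maincu}: the absence of repelling periodic cutpoints of $J(f)$, and multiplier~$1$ at every non-repelling periodic point outside one possible exceptional fixed point.

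I would begin with the identification $\lam_f=\lam_{\sim_f}$. By the construction described just before Definition~\ref{d:lampair}, the only leaves and finite gaps added to $\lam_{\sim_f}$ to form $\lam_f$ come from periodic cuts whose vertex lies in a fiber of $J(f)\to J_{\sim_f}$ corresponding to an infinite $\sim_f$-class, together with their $\sigma_3$-preimages. Let $x$ be the vertex of such a hypothetical cut: then $x$ is periodic and receives at least two distinct periodic external rays, so $x$ is either repelling or parabolic. In the repelling case, these rays together with $x$ separate $\C$ and local expansion forces pieces of $J(f)$ into both complementary components, making $x$ a repelling periodic cutpoint of $J(f)$ and contradicting~(2). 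In the parabolic case, the description of the finest monotone quotient in \cite{bco11} shows that the fiber over the image of $x$ is finite, so the $\sim_f$-class containing the landing angles is finite, not infinite.

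Next, for the cubioidality of $\sim_f$, take a periodic non-degenerate leaf $\ell=\ol{ab}$ of $\sim_f$. Its endpoints sit in a finite periodic $\sim_f$-class whose fiber (again by \cite{bco11}) contains a periodic point $x$ of $f$ at which $R_f(a)$ and $R_f(b)$ land. By~(2), $x$ is not a repelling cutpoint, hence $x$ is parabolic. The parabolic petals at $x$ each lie in a Fatou component whose boundary passes through $x$, and each such component yields a Fatou gap of $\lam_{\sim_f}$ adjacent to $\ell$. Because the boundary of a bounded attracting or parabolic Fatou component carries uncountably many accessible points that are not cutpoints of $J(f)$, the basis of such a Fatou gap includes many $\sim_f$-singletons and so is not a single $\sim_f$-class, giving~(I.a). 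For~(I.b), a finite $\sim_f$-class with non-zero rotation number corresponds to a periodic point of $f$ at which external rays are cyclically permuted non-trivially, which forces $f$ to have either a repelling periodic cutpoint (excluded by~(2)) or a non-repelling periodic point with non-trivial multiplier---possible by~(3) only at the one exceptional fixed point. A Siegel gap of $\sim_f$ similarly demands an irrationally indifferent linearizable cycle, again only available at that same fixed point. Hence $\sim_f$ carries at most one rotational set.

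The main obstacle I expect is the precise invocation of \cite{bco11} needed to translate the cubioid hypotheses into statements about fibers---namely, that under (2) and (3) every finite periodic $\sim_f$-class corresponds to a single periodic point at which all the corresponding rays land, and dually that no periodic point lies inside an infinite $\sim_f$-fiber. Once this dictionary between $\sim_f$-classes and genuine landing patterns is in hand, the remainder is a coordinated case analysis driven by the dynamical type---repelling, parabolic with multiplier~$1$, or exceptional indifferent---of the candidate periodic point.
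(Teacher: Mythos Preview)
Your overall decomposition is correct, but the argument for $\lam_f=\lam_{\sim_f}$ has a genuine gap in the parabolic case. You assert that if the vertex $x$ of a periodic cut lying in a CS-fiber is parabolic, then ``the description of the finest monotone quotient in \cite{bco11} shows that the fiber over the image of $x$ is finite.'' No such statement is available: Theorem~\ref{t:dynproje} only gives the implication \emph{finite periodic class $\Rightarrow$ fiber is a point}, not the converse, and nothing in \cite{bco11} prevents a parabolic periodic point from sitting inside a CS-fiber. The paper's Lemma~\ref{l:lamnotlam} takes this case seriously: it first uses Lemma~\ref{l:4.4} to place a Cremer or Siegel point $0$ in the invariant CS-fiber $F$, then collapses the associated quadratic invariant gap $G$ via $\vp_G$ to obtain a $\si_2$-invariant geo-lamination, invokes Thurston's structure theory (a finite invariant rotational gap or the leaf $\ol{\frac13\frac23}$ must appear), lifts this back to an invariant set $H_1\subset G$, and finally produces a second non-repelling periodic point with multiplier $\ne 1$, contradicting condition~(3). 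None of this is bypassed by a citation to \cite{bco11}; the argument genuinely needs the full strength of the hypothesis $[f]\in\cu$.

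There is also a smaller gap in your verification of~(I.a). You say the parabolic domains at $x$ yield Fatou gaps ``adjacent to $\ell$,'' but in fact they are only adjacent to \emph{some} edges of $\ch(\ar_f(x))$, and a given periodic leaf $\ell$ need not be one of them. The paper closes this in two steps: first it establishes good Fatou gaps along at least one cycle of edges of the invariant rotational gap, then invokes \cite[Corollary~5.5]{BOPT} to propagate this to every periodic leaf of $\lam_f$; finally it rules out the residual possibility of a finite gap with two edge-cycles, one carrying genuine Fatou gaps and the other carrying gaps that are single $\sim_f$-classes, again by forcing an extra Cremer/Siegel point and contradicting~(3). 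Your argument for~(I.b) is essentially the paper's.
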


\begin{comment}

the absence of
repelling periodic cutpoints is necessary. Indeed, consider a
laminational pair $(\sim_f, \lam_f)$ where $\lam_{\sim_f}=\lam_f$
consists of an invariant Fatou gap $G$, a periodic Fatou gap $H$
attached to $G$ along a leaf $M$ and its orbit, and the appropriate
pullbacks of $G$ and $H$. This laminational pair may be associated
with a polynomial $f$ which has an invariant attracting Fatou domain
$U_f$ (corresponding to $G$) and a periodic attracting Fatou domain
$V_f$ (corresponding to $H$) attached to $U$. Clearly, $[f]\notin
\cu$ because the point $x$ at which $V$ is attached to $U$ must be a
repelling periodic cutpoint. However the laminational pair
$(\sim_f, \lam_f)$ may be also associated with a polynomial $g$ (i.e.,
$\sim_f=\sim_g$ and $\lam_f=\lam_g$) which has parabolic domains
$U_g$ (similar to $U_f$) and $V_g$ (similar to $V_f$), and it is
easy to see that $[g]\in \cu$. The assumption of the polynomial not
having repelling periodic cutpoints is necessary to avoid this kind
of ambiguity.

\end{comment}

\medskip

{\footnotesize \emph{Notation:} we write $\ol A$ for the closure of
a subset $A$ of a topological space and $\bd(A)$ for the boundary of
$A$; the $n$-th iterate of a map $f$ is denoted by $f^{\circ n}$.
For $A\subset\C$ let $\ch(A)$ be the \emph{convex hull} of $A$ in
$\C$. If it does not cause ambiguity, we will often speak of
cutpoints meaning cutpoints of the appropriate Julia sets.
We will consistently identify \emph{angles}, i.e. elements of $\R/\Z$,
with points of the unit circle $\uc\subset\C$.}

\section{Proof of Theorem A}
We first recall some terminology and notation.

\subsection{Dynamic rays}
Let $f(z)=z^d+a_{d-1}z^{d-1}+\dots +a_0$ be a monic degree $d$
polynomial. The Green function $G_f$ is defined by the formula
$$
G_f(z)=\lim_{n\to\infty}\frac{\log_+|f^{\circ n}(z)|}{d^n},
$$
where $\log_+r$ equals $\log r$ if $r>0$ and $0$ otherwise. This
function is harmonic on the complement of the filled Julia set
$K(f)$ of $f$ and is equal to $0$ on $K(f)$. Define {\em dynamic
rays} as unbounded trajectories of the gradient flow for $G_f$. Let
$V(f)$ be the union of all dynamic rays of $f$. Then $V(f)$ is a
forward-invariant open set, and there is a conformal isomorphism
$\phi_f$ between $V(f)$ and some open subset of the set $\{|z|>1\}$
with the following properties:
$$
\phi_f(f(z))=\phi_f(z)^d,\quad G_f(z)=\log|\phi_f(z)|.
$$
These properties define the map $\phi_f$ almost uniquely: the only
way to change the map $\phi_f$ without violating the two properties
is to post-compose it with multiplication by a $(d-1)$-st root of
unity.

The map $\phi_f$ is called a {\em B\"ottcher coordinate}. It is used
to parameterize dynamic rays of $f$. Every dynamic ray is the
preimage of a straight radial ray $\{re^{2\pi i\theta}\,|\,
r>r_0\}$, $r_0\ge 1$, under the map $\phi_f$ (if $K(f)$ is
disconnected then $r_0>1$ if the dynamic ray contains a pre-critical
point in its closure). We will write $R_f(\theta)$ for this ray, and
call it the \emph{dynamic ray of argument $\theta$}. Arguments of
dynamic rays are {\em angles}, i.e. elements of the group $\R/\Z$.
If $f$ is a degree $d$ polynomial, not necessarily monic, then we
can make it monic by a complex linear change of variables. Thus, it
still makes sense to talk about dynamic rays of $f$.

However, arguments of dynamic rays are not well-defined, since they
depend on the choice of a B\"ottcher coordinate. Hence every time we
consider rays in dynamic planes of different polynomials, we must
resolve the issue of choosing the arguments consistently. For
example, if a sequence $f_n$ of degree $d$ polynomials converges to
a degree $d$ polynomial $f$, then we can choose any B\"ottcher
coordinate for $f$, and then, for $f_n$ sufficiently close to $f$,
choose the B\"ottcher coordinate for $f_n$ that is close to the
chosen B\"ottcher coordinate for $f$.

Suppose that the Julia set $J(f)$ is connected. Consider a
periodic repelling cutpoint $\alpha$ of $f$, and let $r$ be its
minimal period. Then there are finitely many dynamic rays landing at
$\alpha$; we will assume that the choice of their arguments is
fixed, and denote the set of the arguments by $\ar_f(\alpha)$. Every
wedge between consecutive rays landing at $\alpha$ contains exactly
one component of $J(f)\setminus\{\alpha\}$. The dynamic rays landing
at $\alpha$ may form one or more orbits under the map $f^{\circ r}$.
Choose one of the orbits, and let $\theta_0$, $\dots$,
$\theta_{q-1}$ denote the arguments of all rays in this orbit
labeled in the counterclockwise order. Suppose that
$d^r\cdot\theta_i=\theta_{i+p\pmod q}$, where $d$ is the degree of
the polynomial $f$. In this case, we say that $\alpha$ has {\em
combinatorial rotation number} $p/q$; observe that $p, q$ are
coprime except for the case when $p/q=0$ and all rays landing at
$\alpha$ are invariant. Every repelling fixed point has a
well-defined combinatorial rotation number (thus, $p/q$ above does
not depend on the choice of $\theta_0$).

\subsection{Polynomials in $\ol{\phd}_d$}
We now recall Lemma B.1 from \cite{GM} that goes back to Douady and
Hubbard \cite{DH}.

\begin{lem}
 \label{l:rep}
Let $f$ be a polynomial, and $z$ be a repelling periodic point of
$f$. If the ray $R_f(\theta)$ lands at $z$, then, for every
polynomial $g$ sufficiently close to $f$, the ray $R_{g}(\theta)$
lands at a repelling periodic point $w$ close to $z$. Moreover, $w$
depends holomorphically on $g$.
\end{lem}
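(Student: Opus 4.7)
The plan is to combine three standard ingredients: the holomorphic implicit function theorem (to continue the repelling point $z$), Koenigs linearization at the repelling cycle with holomorphic parameter dependence, and the holomorphic dependence of B\"ottcher coordinates on $g$.

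First, let $n$ be a common period of $z$ under $f$ and of $\theta$ under $\si_d$ with $d=\deg f$; set $\mu=(f^{\circ n})'(z)$, so $|\mu|>1$ and in particular $\mu\ne 1$. Viewing $(g,w)\mapsto g^{\circ n}(w)-w$ as a holomorphic function near $(f,z)$, its $w$-derivative there equals $\mu-1\ne 0$. The holomorphic implicit function theorem yields a neighborhood $U$ of $f$ in polynomial space and a holomorphic map $w\colon U\to\C$ with $w(f)=z$ and $g^{\circ n}(w(g))=w(g)$. After shrinking $U$, the multiplier $\mu(g)=(g^{\circ n})'(w(g))$ still satisfies $|\mu(g)|>1$, so $w(g)$ is repelling; this already gives the claimed holomorphic dependence of $w$ on $g$.

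Next, install a Koenigs linearization $\psi_g(\zeta)=\lim_{k\to\infty}\mu(g)^{-k}(g^{\circ nk}(\zeta)-w(g))$ on a neighborhood $V_g$ of $w(g)$; it conjugates $g^{\circ n}|_{V_g}$ to multiplication by $\mu(g)$ and is jointly holomorphic in $(g,\zeta)$. After further shrinking $U$, a fixed topological disk $V\ni z$ is contained in every $V_g$. Separately, for any fixed positive potential $t$ at which $\phi_f^{-1}(e^{t+2\pi i\theta})$ is defined, the point $x_g(t)=\phi_g^{-1}(e^{t+2\pi i\theta})$ is defined and depends holomorphically on $g\in U$ (possibly after shrinking $U$ once more), by the standard holomorphic dependence of the B\"ottcher coordinate on compact portions of its domain.

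Since $R_f(\theta)$ lands at $z$, choose $t_0>0$ so that the tail $\{x_f(t):0<t\le t_0\}$ lies inside $V$. Using $\si_d^n(\theta)=\theta$ we have $f^{\circ n}(x_f(t))=x_f(d^n t)$, so the whole tail is reproduced from the compact segment $I_f=\{x_f(t):t_0\le t\le d^n t_0\}$ by iterating the attracting local inverse of $f^{\circ n}$ fixing $z$. For $g\in U$ the segment $I_g=\{x_g(t):t_0\le t\le d^n t_0\}$ is close to $I_f$ and hence lies in $V\subset V_g$; iterating the attracting local inverse of $g^{\circ n}$ at $w(g)$ produces the tail of $R_g(\theta)$. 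In the Koenigs coordinate this is multiplication of $\psi_g(I_g)$ by powers of $\mu(g)^{-1}$, a geometric contraction toward $0$, so the tail of $R_g(\theta)$ converges to $w(g)$ and $R_g(\theta)$ lands at $w(g)$.

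The main obstacle is verifying that the backward iterates $(g^{\circ n})^{-k}(I_g)$ are well defined and remain inside $V_g$, so that the construction really yields a continuous curve landing at $w(g)$. Both points descend from the hypothesis that $R_f(\theta)$ lands at $z$: that forces the corresponding pullbacks for $f$ to avoid the forward critical set, and continuity in $g$ transfers this to a possibly smaller neighborhood $U$; once a pullback enters $V_g$, the attracting Koenigs dynamics traps all subsequent ones.
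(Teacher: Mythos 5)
The paper does not actually prove this lemma: it is quoted as Lemma B.1 of \cite{GM}, going back to Douady and Hubbard, so there is no internal proof to compare with. Your argument is precisely the standard proof of that result --- implicit function theorem to continue the repelling point $w(g)$ holomorphically, a parameter-holomorphic linearization at it, holomorphic dependence of the B\"ottcher coordinate on a compact fundamental segment $I_g$ of the ray, and pullback of that segment by the attracting local inverse branch of $g^{\circ n}$ --- and it is essentially correct; in particular you correctly isolate the one delicate point, namely that the inverse branch determined by the ray agrees with the local inverse branch fixing $w(g)$ (this holds because both send $x_g(d^nt_0)$ to a preimage of it close to $x_f(t_0)$, and the preimages are uniformly separated), after which the pullbacks are trapped in the linearization domain and contract geometrically to $w(g)$. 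Two small repairs are needed. First, the displayed Koenigs formula $\psi_g(\zeta)=\lim_{k\to\infty}\mu(g)^{-k}\bigl(g^{\circ nk}(\zeta)-w(g)\bigr)$ is the formula for an \emph{attracting} fixed point and does not converge on a neighborhood of a repelling one (the forward orbit leaves every small disk); you should instead linearize the attracting local inverse $h_g$ of $g^{\circ n}$ at $w(g)$, i.e. set $\psi_g(\zeta)=\lim_{k\to\infty}\mu(g)^{k}\bigl(h_g^{\circ k}(\zeta)-w(g)\bigr)$, which has all the properties you use. Second, you take ``a common period of $z$ under $f$ and of $\theta$ under $\sigma_d$'' without comment, whereas the lemma only assumes that $R_f(\theta)$ lands at the repelling periodic point $z$; that such a $\theta$ is necessarily periodic is itself a classical but nontrivial theorem of Douady and Hubbard and should be invoked explicitly. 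Neither point affects the substance of the argument.
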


Choose a polynomial $f$ with $[f]\in\ol{\phd}_d$. For brevity, by a
\emph{cutpoint} we mean a cutpoint of $J(f)$. We want to prove the
following statements:
\begin{enumerate}
\item
the map $f$ has no repelling periodic cutpoints, and
\item
the map $f$ has at most one non-repelling periodic point of
multiplier different from $1$.
\end{enumerate}

Statement (1) follows from Lemma \ref{l:rep}. Indeed, suppose that
$\alpha$ is a repelling periodic cutpoint of $f$. Let $\theta_0$,
$\dots$, $\theta_{r-1}$ be the arguments of dynamic rays landing at
$\alpha$. Since $\alpha$ is a cutpoint, then $r>1$. Lemma
\ref{l:rep} says that, for $g$ sufficiently close to $f$, the
dynamic rays with arguments $\theta_0$, $\dots$, $\theta_{r-1}$ in
the dynamic plane of $g$ land at the same periodic point that is
obtained from $\alpha$ by analytic continuation. We get a
contradiction if we choose $g$ such that $[g]\in\phd_d$.

The second statement is a consequence of the {\em Yoccoz
inequality}, see e.g. \cite{Hu}. It follows from the Yoccoz
inequality that, for any sequence of polynomials $f_n$ with
repelling fixed points $\al_n\to \al$, the fact that $f_n'(\alpha_n)\to
e^{2\pi i\rho}$ implies that the combinatorial rotation numbers of
$f_n$ at $\alpha_n$ converge to $\rho$. We can now prove
Lemma~\ref{l:lim-attr}.

\begin{lem}
\label{l:lim-attr}
  Consider a polynomial $f$, whose class belongs to $\ol{\phd}_d$,
  and a sequence of polynomials $f_n$ converging to $f$, whose
  classes belong to $\phd_d$.
  If $f$ has a non-repelling fixed point $\alpha$, whose multiplier is
  different from 1, then $\alpha$ is the limit of the attracting fixed points of $f_n$.
\end{lem}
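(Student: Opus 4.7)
The plan is to reduce the lemma to showing that for all sufficiently large $n$, the polynomial $f_n$ has an attracting fixed point $\alpha_n$ converging to $\alpha$. Once this is done, the definition of $\phd_d$ (all critical points lie in the immediate basin of a single attracting fixed point) forces this $\alpha_n$ to coincide with the unique attracting fixed point $\beta_n$ of $f_n$, yielding $\beta_n\to\alpha$. Since $f'(\alpha)\ne 1$, the function $z\mapsto f(z)-z$ has a simple zero at $\alpha$, so the implicit function theorem produces, for all large $n$, a unique fixed point $\alpha_n$ of $f_n$ close to $\alpha$, with $\alpha_n\to\alpha$ and multiplier $f_n'(\alpha_n)\to f'(\alpha)$. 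It remains to show that $\alpha_n$ is attracting for large $n$.

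I would split into two cases. If $|f'(\alpha)|<1$, continuity of the multiplier gives $|f_n'(\alpha_n)|<1$ eventually, and we are done. If $|f'(\alpha)|=1$, write $f'(\alpha)=e^{2\pi i\rho}$ with $\rho\not\equiv 0\pmod{\Z}$, using the hypothesis that the multiplier differs from $1$. Because $[f_n]\in\phd_d$ makes $f_n$ hyperbolic, $\alpha_n$ is either attracting or repelling. Suppose, for contradiction, that $\alpha_n$ is repelling for infinitely many $n$; passing to a subsequence assume this for all $n$. Then $f_n'(\alpha_n)\to e^{2\pi i\rho}$ with $\rho\ne 0$, so by the Yoccoz-inequality consequence recalled just before the lemma, the combinatorial rotation number $p_n/q_n$ of $\alpha_n$ converges to $\rho$, and is in particular nonzero for large $n$. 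With the coprimality convention for nonzero combinatorial rotation numbers this forces $q_n\ge 2$, so at least two dynamic rays of $f_n$ land at $\alpha_n$, making $\alpha_n$ a cutpoint of $J(f_n)$. But $[f_n]\in\phd_d$ implies that $K(f_n)$ is a Jordan disk and hence $J(f_n)$ is a Jordan curve, which admits no cutpoints; contradiction.

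The main obstacle is the case $|f'(\alpha)|=1$: the multiplier information alone cannot distinguish a perturbed neutral fixed point from a repelling one, and one must convert the assumption $\rho\ne 0$ into a \emph{global} topological statement, namely the existence of two or more rays landing at $\alpha_n$, which then collides with the Jordan-curve structure of $J(f_n)$ guaranteed by $[f_n]\in\phd_d$. The Yoccoz inequality, as packaged in the paper's preceding remark, is the crucial ingredient that makes this conversion possible.
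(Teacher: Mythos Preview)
Your proof is correct and follows essentially the same route as the paper. Both arguments hinge on combining the Yoccoz inequality (multiplier convergence forces convergence of combinatorial rotation numbers) with the fact that $J(f_n)$ is a Jordan curve for $[f_n]\in\phd_d$; the paper phrases the contradiction as ``rotation number $0$ cannot converge to $\rho\ne 0$,'' while you phrase the equivalent contrapositive as ``rotation number $\ne 0$ forces $\ge 2$ landing rays, hence a cutpoint.'' Your version is more explicit about the attracting case and the implicit-function-theorem step producing $\alpha_n$, both of which the paper leaves tacit.
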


\begin{proof}
Let $\al$ be neutral and
 $\alpha_n$ be a fixed point of $f_n$ such that $\alpha_n\to\alpha$
 as $n\to\infty$.
 If $\alpha_n$ are attracting for arbitrarily large $n$, then we are done.
 Otherwise assume that for all large $n$, the points $\alpha_n$ are repelling
 fixed points. Then their combinatorial rotation numbers equal 0.
 By the assumptions, $f'(\alpha)=e^{2\pi i\rho}$, where
 $\rho\not\equiv 0\pmod{2\pi}$. On the other hand, by the Yoccoz
 inequality, the combinatorial rotation numbers of $f_n$ at
 $\alpha_n$ (which are all equal to 0) must converge to $\rho$, a contradiction.
\end{proof}

We can now complete the proof of Theorem A.

\begin{proof}[Proof of Theorem A]
Observe that if $[f]\in \ol{\phd}_d$ then one of the non-repelling
cycles of $f$ must be a fixed point (indeed, as we approximate $f$
with polynomials $g$, whose classes belong to $\phd_d$, the
attracting fixed points of $g$ converge to a non-repelling fixed
point of $f$). Let $[f]\in\ol{\phd}_d$. By way of contradiction,
suppose that $\alpha$ and $\beta$ are two non-repelling periodic
points, whose multipliers are different from one. Replacing $f$ with a
suitable iterate, we may assume that $\alpha$ and $\beta$ are fixed.
At least one of the two points, say, $\alpha$, is not the limit of
the attracting fixed points of polynomials $g$ with $[g]\in\phd_d$
approximating the polynomial $f$. But this is a contradiction with
Lemma \ref{l:lim-attr}.
\end{proof}

\section{Proof of the first part of Theorem B}
\label{s:thmB} By definition, all polynomials from $\ope$ have a
fixed non-repelling point $\al$. Mapping $\al$ to 0 by an affine
transformation, rotating and rescaling if necessary, we reduce
polynomials from $\ope$ to the form
$$
f_{\lambda,b}(z)=\lambda z+bz^2+z^3, |\la|\le 1.
$$
Let $\Fc$ denote the space of all cubic polynomials of the form
$f_{\la,b}$ and $\Fc_{nr}$ denote the subspace of $\Fc$ consisting
of polynomials $f_{\la,b}$ with $|\la|\le 1$.

Fix $\la$ with $|\la|\le 1$. Let $\mathfrak{g}_{\la,b}$
be the Green function for $K(f_{\la,b})$. Let $V_{\la,b}$ be the
union of all unbounded trajectories of the gradient flow generated
by $\mathfrak{g}_{\la,b}$. The B\"ottcher coordinate is an analytic
map $\phi_{\la,b}:V_{\la,b}\to\C$ with $\phi_{\la,b}\circ
f_{\la,b}=\phi_{\la,b}^3$ and $\phi_{\la,b}(z)=z+o(z)$ as
$z\to\infty$.

\begin{thm}[\cite{BH}, Proposition 2]
 \label{T:BH}
Let $\Vc_\la$ be the union of $\{b\}\times V_{\la,b}$ over all
$b\in\C$. This set is open in $\C^2$. The map
$\Phi_\la:\Vc_\la\to\C^2$ given by the formula
$\Phi_\la(b,z)=(b,\phi_{\la,b}(z))$ is an analytic embedding of
$\Vc$ into $\C^2$.
\end{thm}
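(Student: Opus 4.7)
The plan is to prove the two assertions separately: first that $\Vc_\la$ is open in $\C^2$, and second that $\Phi_\la$ is an analytic embedding. Both will follow from a single construction, namely the extension of the B\"ottcher coordinate $\phi_{\la,b}$ in the two variables $(b,z)$ starting from a neighborhood of infinity and pulled back via the functional equation $\phi_{\la,b}\circ f_{\la,b}=\phi_{\la,b}^3$.

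The first step is to construct $\phi_{\la,b}(z)$ near $z=\infty$ as a Laurent series $\phi_{\la,b}(z)=z+\sum_{k\ge 0}c_k(\la,b)z^{-k}$. Plugging this series into the conjugacy equation and comparing coefficients yields an explicit recursion showing that each $c_k(\la,b)$ is a polynomial in $b$ (with $\la$ fixed), hence depends analytically on $b$. A standard geometric-series estimate bounds $|c_k(\la,b)|$ and shows that the series converges uniformly on any set of the form $\{(b,z):|b|<M,\ |z|>R(M)\}$. Thus the map $(b,z)\mapsto \phi_{\la,b}(z)$ is jointly analytic on an open set $\Wc_\la\subset\C^2$ that contains a neighborhood of infinity in every fiber $\{b\}\times\C$, and $\Phi_\la$ is injective on $\Wc_\la$.

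Next I propagate this to all of $\Vc_\la$ using the dynamics. Fix $(b_0,z_0)\in\Vc_\la$. Because $z_0$ lies on an unbounded gradient trajectory of $\mathfrak{g}_{\la,b_0}$, the iterates $f^{\circ n}_{\la,b_0}(z_0)$ eventually enter the near-infinity region where the power-series branch is defined. By continuity of iteration in $(b,z)$, the same is true for $(b,z)$ in a $\C^2$-neighborhood $U$ of $(b_0,z_0)$, and after shrinking $U$ the gradient trajectories from $(b,z)$ up to the stage they reach $\Wc_\la$ avoid critical points of $\mathfrak{g}_{\la,b}$. This already proves that $\Vc_\la$ is open. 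On $U$ I then define $\phi_{\la,b}(z)$ to be the $3^n$-th root of $\phi_{\la,b}(f^{\circ n}_{\la,b}(z))$ in the branch determined by analytic continuation along the gradient trajectory from $z$ to $f^{\circ n}_{\la,b}(z)$; the result is independent of the large $n$ chosen, agrees with the near-infinity formula where both are defined, and is jointly analytic in $(b,z)$ because it is obtained by composing analytic operations.

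Finally, for the embedding property, I verify injectivity and non-vanishing of the Jacobian. Two distinct points with the same first coordinate are sent to distinct images because for each fixed $b$ the map $\phi_{\la,b}$ is univalent on $V_{\la,b}$ (it is a conformal map onto its image), and distinct first coordinates are clearly separated. The differential of $\Phi_\la$ at $(b,z)$ is lower-triangular with diagonal entries $1$ and $\partial_z\phi_{\la,b}(z)$; univalence of $\phi_{\la,b}$ in $z$ forces this last factor to be nonzero, so the inverse function theorem furnishes a local holomorphic inverse and $\Phi_\la$ is a biholomorphism onto its image. The main obstacle is the global consistency of the $3^n$-th roots used in the pullback step: one must check that the branch chosen via gradient-trajectory continuation does not depend on $n$ and does not develop monodromy as $(b,z)$ varies in $\Vc_\la$. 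This is handled by noting that along a single gradient trajectory the B\"ottcher coordinate already provides a canonical univalent branch, and that the branch-choice function is locally constant in $(b,z)$, so connectedness of the relevant open sets in $\C^2$ plus agreement near infinity pins the branch down uniquely.
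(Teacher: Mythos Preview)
The paper does not contain its own proof of this statement: Theorem~\ref{T:BH} is quoted verbatim from Buff--Henriksen \cite{BH}, Proposition~2, and used as a black box. So there is no in-paper proof to compare against. Your outline follows the standard Branner--Hubbard/Buff--Henriksen construction (power series for $\phi_{\la,b}$ near infinity with coefficients polynomial in $b$, then dynamical pullback), and that is indeed how the cited result is obtained.

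One point in your write-up needs correcting. You speak of ``analytic continuation along the gradient trajectory from $z$ to $f^{\circ n}_{\la,b}(z)$'', but the gradient trajectory through $z$ (i.e.\ the external ray through $z$) does \emph{not} in general contain $f^{\circ n}_{\la,b}(z)$: the map $f_{\la,b}$ sends the ray of argument $\theta$ to the ray of argument $3\theta$. What you actually need is one of two equivalent devices. Either (i) follow the gradient trajectory from $z$ outward until it enters the near-infinity region $\Wc_\la$, where the series branch of $\phi_{\la,b}$ is already defined, and continue $\phi_{\la,b}$ back along that ray to $z$; this is legitimate precisely because $z\in V_{\la,b}$ means the ray reaches infinity without meeting a critical point of $\mathfrak g_{\la,b}$. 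Or (ii) use the functional equation $\phi_{\la,b}(z)^{3^n}=\phi_{\la,b}(f^{\circ n}_{\la,b}(z))$ and pin down the branch of the $3^n$-th root by requiring $\phi_{\la,b}(z)$ to lie on the straight ray $\{re^{2\pi i\theta}:r>1\}$ determined by option~(i). Once phrased this way, independence of $n$ and absence of monodromy are immediate, and the rest of your argument (openness of $\Vc_\la$ via continuity of the gradient flow and of the critical locus in $b$; injectivity and nonvanishing Jacobian of $\Phi_\la$) goes through as you wrote it.
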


\begin{comment}
The {\em dynamic ray} $R_{\la,b}(\theta)$ of argument $\theta$ is
defined as the preimage of the straight ray $\{re^{2\pi i\theta}\ |\
r>1\}$ under $\phi_{\la,b}$.
\end{comment}
We will write $R_{\la,b}(\ta)$ for the dynamic ray $R_{f_{\la,b}}(\ta)$.

\subsection{Polynomials with parabolic points and their petals}

Let $g$ be a polynomial of arbitrary degree such that $0$ is a fixed
parabolic point of $g$ of multiplier 1. Suppose that
$g(z)=z+az^{q+1}+o(z^{q+1})$, where $q$ is a positive integer.
Recall from \cite{M} that an \emph{attracting vector} for $g$ is
defined as a vector (=complex number) $v$ such that $av^q$ is a
negative real number, i.e. $v$ and $av^{q+1}$ have opposite
directions. Clearly, there are $q$ straight rays consisting of
attracting vectors that divide the plane of complex numbers into $q$
\emph{repelling sectors}.

Consider a repelling sector $S$. Note that the set
$S^{-q}=\{z\in\C\,|\, z^{-q}\in S\}$ is the complement of the ray
$\{-ta\,|\, t>0\}$ in $\C$. Let $U$ be a sufficiently small disk
around $0$. We will write $F$ for the composition of the function
$w\mapsto w^{-1/q}$ mapping $(S\cap U)^{-q}$ onto $S\cap U$, the
function $g$ mapping $S\cap U$ onto $g(S\cap U)$, and the function
$z\mapsto z^{-q}$ mapping $g(S\cap U)$ to $\C$. We have
$F(w)=w-qa+\alpha(w)$, where $\alpha(w)$ denotes a power series in
$w^{-1/q}$ that converges in a neighborhood of infinity, and whose
free term is zero (note that this function is single valued and
holomorphic on $(S\cap U)^{-q}$). It follows that there exists a positive
real number $r$ with the property $|\alpha(w)|<\frac{|a|}2$ whenever
$|w|>r|a|$. Consider the half-plane $\Pi$ given by the inequality
$\Re(w/a)>r$. Since this inequality implies that $|w|>r|a|$, we have
$F(\Pi)\supset\Pi$, and also that the shortest distance from a point
on the boundary of $\Pi$ to a point on the boundary of $F(\Pi)$ is at
least $(q-\frac 12)|a|$. The preimage $RP$ of the half-plane $\Pi$
under the map $z\mapsto z^{-q}$ from $S$ to $S^{-q}$ is called a
\emph{repelling petal} of $g$.

Every repelling sector includes a repelling petal; thus, our
polynomial $g(z)=z+az^{q+1}+o(z^{q+1})$ has $q$ repelling petals. A
repelling petal $RP$ of $g$ is such that $g(RP)\supset RP$. Let us
discuss the dependence of the repelling petals on parameters.
Suppose that a polynomial $\tilde g(z)=z+\tilde a z^{q+1}+\dots$ is
very close to the polynomial $g(z)$, in particular, the coefficients
$a$ and $\tilde a$ are very close to each other. We will assume that
$a$ is nonzero, and that $\tilde a/a$ is close to 1. Consider
repelling sectors $S$ and $\tilde S$ of $g$ and $\tilde g$,
respectively. Since $a$ and $\tilde a$ are close, we can choose the
repelling sectors $S$ and $\tilde S$ to be close.

The half-planes $\Pi$ and $\tilde\Pi$ associated with polynomials
$g$ and $\tilde g$ in the same way as above (we may choose the same
sufficiently large $r$ for both $g$ and $\tilde g$) are also close in the
Hausdorff metric associated with the spherical metric (although this
is not true for the Euclidean metric). The preimages of $\Pi$ and
$\tilde\Pi$ under the map $z\mapsto z^{-q}$ acting on $S^{-q}$ and
$\tilde S^{-q}$, respectively, are also close. Thus we obtain the
following lemma (cf. the proof of Lemma 5 in \cite{BH}).

\begin{lem}
  \label{l:rp}
  Let $g_t(z)=z+a_t z^{q+1}+o(z^{q+1})$ be a continuous family
  of polynomials, in which $a_t$ never vanishes.
  Then all $q$ repelling petals of $g_t$ can be chosen to vary continuously
  with respect to the parameter.
\end{lem}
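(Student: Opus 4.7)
The plan is to follow exactly the construction carried out in the paragraphs immediately preceding the lemma, tracking continuity of every ingredient with respect to $t$. Since continuity is a local property of $t$, I would fix a parameter value $t_0$, restrict to a small closed neighborhood $N$ of $t_0$ on which $a_t$ is bounded away from $0$, and verify that the sector $S_t$, the conjugate map $F_t$, the half-plane $\Pi_t$, and finally the repelling petal $RP_t$ can all be chosen to depend continuously on $t\in N$ with respect to the spherical Hausdorff metric on closed subsets of $\C\cup\{\infty\}$.

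First I would select one of the $q$ repelling sectors $S_t$ as a continuous function of $t$. The $q$ repelling directions at $0$ are the $q$-th roots of $-1/a_t$ (up to positive scaling), and since $a_t$ is continuous and nowhere zero on $N$ these roots admit $q$ continuous branches; fixing one branch produces a continuously varying family of sectors $S_t$ together with continuously varying complementary rays bounding the regions $S_t^{-q}$. Next, conjugating $g_t$ by $w=z^{-q}$ on $S_t\cap U$ produces $F_t(w)=w-qa_t+\alpha_t(w)$, where the coefficients of $\alpha_t$, as a series in $w^{-1/q}$, are polynomial expressions in the Taylor coefficients of $g_t$ at $0$ and hence depend continuously on $t$. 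In particular $\alpha_t(w)\to 0$ as $w\to\infty$ uniformly in $t\in N$, so by compactness of $N$ I can choose a single $r>0$ with $|\alpha_t(w)|<|a_t|/2$ whenever $|w|>r|a_t|$, uniformly for $t\in N$. With this $r$ the half-plane $\Pi_t=\{w\,|\,\Re(w/a_t)>r\}$ satisfies $F_t(\Pi_t)\supset\Pi_t$ exactly as in the preceding construction, and since $a_t$ varies continuously in $\C^*$, the family $\{\Pi_t\}$ varies continuously in the spherical Hausdorff metric. Pulling $\Pi_t$ back through the bijection $z\mapsto z^{-q}\colon S_t\to S_t^{-q}$ yields $RP_t$; because pullback by a continuously varying holomorphic branch preserves spherical Hausdorff continuity, this gives continuity of each petal on a neighborhood of $t_0$, and patching these local choices delivers the conclusion globally.

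The one delicate point, and the one the authors already flagged in the paragraph immediately before the lemma, is that the half-planes $\Pi_t$ and the resulting petals $RP_t$ are unbounded, so the argument must be carried out in the spherical, not the Euclidean, Hausdorff metric; otherwise the rotation of the defining line $\{\Re(w/a_t)=r\}$ caused by the rotation of $a_t$ fails to give a Hausdorff-continuous family. Once the whole discussion is transferred to the Riemann sphere this difficulty evaporates, because the rays at infinity that bound $\Pi_t$ simply move with $a_t$ as continuously varying arcs in $\mathbb{S}^2$. The hypothesis that $a_t$ never vanishes is used precisely to guarantee that the $q$ repelling directions can be labeled by continuous branches along the parameter, so no monodromy obstruction arises when transporting the chosen labeling of the $q$ petals along the given continuous family $g_t$.
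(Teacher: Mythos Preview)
Your proposal is correct and follows essentially the same route as the paper: the paper's ``proof'' consists entirely of the two paragraphs preceding the lemma, and you have reproduced that construction faithfully---continuous choice of repelling sector, conjugation to $F_t(w)=w-qa_t+\alpha_t(w)$, a uniform choice of $r$, the half-plane $\Pi_t$ varying continuously in the spherical Hausdorff metric, and pullback by $z\mapsto z^{-q}$. Your write-up is in fact more careful than the paper's, supplying the local-to-global framing and explicitly invoking compactness to get a uniform $r$; the only minor overreach is the claim that $a_t\ne 0$ alone precludes monodromy of the petal labeling (on a non--simply-connected parameter space it need not), but the paper does not address this either and only uses the lemma locally.
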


\subsection{Stability of rays and their perturbations}
Throughout this subsection, we fix $\lambda$ that is a root of
unity, i.e. $\lambda=\exp(2\pi ip/q)$ for some relatively prime $p$
and $q$. Since $\la$ is fixed, we will skip $\la$ from the notation
$f_b$, $R_b(\theta)$ etc. We discuss conditions that guarantee that
a dynamical ray $R_{b}(\theta)$ landing at $0$ is stable, i.e., for
$b'$ sufficiently close to $b$, the ray $R_{b'}(\theta)$ also lands
at $0$.

\begin{prop}
\label{P:Tpq}
  We have $f_{b}^{\circ q}(z)=z+T_{p/q}(b)z^{q+1}+o(z^{q+1})$,
  where $T_{p/q}(b)$ is a non-zero polynomial in $b$.
\end{prop}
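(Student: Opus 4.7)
The plan is to verify three claims about the formal Taylor expansion $f_b^{\circ q}(z) = \sum_{n \ge 1} c_n(b)\, z^n$: first, that every coefficient $c_n(b)$ is a polynomial in $b$; second, that $c_1(b) \equiv 1$ and $c_n(b) \equiv 0$ for $2 \le n \le q$, so that $z^{q+1}$ really is the first nontrivial term of $f_b^{\circ q}(z) - z$, with coefficient $T_{p/q}(b) = c_{q+1}(b)$; and third, the substantive claim that $T_{p/q}(b) \not\equiv 0$.

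Items (i) and (ii) are essentially formal. The polynomial dependence on $b$ is immediate from the definition of $f_b$, since composition preserves polynomial dependence in the parameter. The equality $c_1 = \la^q = 1$ is clear, and $c_n \equiv 0$ for $2 \le n \le q$ is the standard local-dynamics observation that any germ $h(z) = \la z + O(z^2)$ with $\la$ a primitive $q$-th root of unity has $h^{\circ q}$ formally conjugate, over $\C[[z]]$, to a series $z + \sum_{k \ge 1} \al_k z^{kq+1}$ of resonant monomials; the homological equations killing the non-resonant monomials in intermediate degrees $n$, $2 \le n \le q$, are solvable because $\la^n \ne \la$ there.

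For (iii) I would rescale. With $\phi(z) = bz$ and $\tilde f_b = \phi \circ f_b \circ \phi^{-1}$, one computes
\[
\tilde f_b(w) \;=\; b\, f_b(w/b) \;=\; \la w + w^2 + \frac{w^3}{b^2},
\]
so $\tilde f_b \to g$ uniformly on compact subsets as $|b| \to \infty$, where $g(w) = \la w + w^2$. Each Taylor coefficient of $\tilde f_b^{\circ q}$ is a polynomial in the coefficients of $\tilde f_b$, so the coefficient $\tilde T(b)$ of $w^{q+1}$ in $\tilde f_b^{\circ q}(w) - w$ converges to the analogous coefficient $T_g$ of $g^{\circ q}(w) - w$. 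The conjugation by $\phi$ forces $\tilde T(b) = T_{p/q}(b)/b^q$, hence $T_{p/q}(b)/b^q \to T_g$ as $|b| \to \infty$.

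It remains to show $T_g \ne 0$. Suppose not. Then the parabolic fixed point $0$ of the quadratic $g$ would have parabolic multiplicity at least $2$, giving at least two distinct cycles of parabolic Fatou components attached to $0$. By Fatou's theorem each such cycle must capture a critical orbit of $g$, but $g$ has only one finite critical point, a contradiction. Hence $T_g \ne 0$, and $T_{p/q}$ is a polynomial in $b$ of degree exactly $q$ with leading coefficient $T_g$. The main obstacle is precisely this nonvanishing step: no purely formal identity forces it, and direct extraction of $T_{p/q}(b)$ from the $q$-fold composition is intractable for general $q$. The rescaling reduces the question to a statement about the quadratic limit, where Fatou's theorem supplies the needed input.
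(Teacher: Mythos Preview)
Your proof is correct. The paper's argument for the nonvanishing of $T_{p/q}$ is more direct: if $T_{p/q}(b)=0$, then $f_b$ has at least two cycles of attracting petals at $0$, so both critical orbits of $f_b$ must converge to $0$; but for large $|b|$ one critical point of $f_b$ escapes, giving the contradiction immediately at the level of the cubic, with no rescaling. Your conjugation $\tilde f_b = b\,f_b(\cdot/b) \to g(w)=\lambda w+w^2$ is a refinement of the same large-$b$ idea---the escaping critical point is exactly the one that runs off to infinity in the quadratic limit---and it buys you the extra information that $T_{p/q}$ has degree exactly $q$ in $b$, with leading coefficient the quadratic invariant $T_g$. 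Both routes rest on the same Fatou input (each cycle of parabolic petals must capture a critical orbit); you trade a one-line escape observation for a cleaner asymptotic and a sharper conclusion.
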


\begin{proof}
By the Petal Theorem \cite[Theorem 6.5.10]{Beardon}, we have
$$
f_{b}^{\circ q}(z)=z+T_{p/q}(b)z^{q+1}+o(z^{q+1}).
$$

It remains to prove that the polynomial $T_{p/q}(b)$ cannot be
identically equal to zero. For any $b$ such that $T_{p/q}(b)=0$, the
polynomial $f_{b}$ has at least two cycles of attracting petals at
$0$. Each of the associated cycles of Fatou domains must contain a
critical point of $f_{b}$. Thus both critical orbits of $f_{b}$
converge to $0$. However, for large $b$, one of the critical points
escapes. Therefore, for such $b$, we have $T_{p/q}(b)\ne 0$.
\end{proof}

\begin{prop}
\label{p:ray0stable}
  Suppose that a dynamic ray $R_{b_*}(\theta)$ with periodic $\theta$
  lands at $0$, and $T_{p/q}(b_*)\ne 0$.
  Then, for all $b$ sufficiently close to $b_*$,
  the ray $R_{b}(\theta)$ lands at $0$.
\end{prop}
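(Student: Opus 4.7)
The plan is to combine the stability of the ray at large B\"ottcher radius, supplied by Theorem~\ref{T:BH}, with the local parabolic dynamics of $f_b^{\circ N}$ near $0$. First I would choose an integer $N$ such that $\si_3^N(\theta)=\theta$ and $q\mid N$, and set $g_b=f_b^{\circ N}$. If $N=qm$, then Proposition~\ref{P:Tpq} gives
$$
g_{b_*}(z)=z+m\,T_{p/q}(b_*)\,z^{q+1}+o(z^{q+1}),
$$
and the coefficient of $z^{q+1}$ is nonzero by hypothesis; by continuity the same holds for $g_b$ whenever $b$ is near $b_*$. Since $R_{b_*}(\theta)$ is $g_{b_*}$-invariant and lands at the parabolic point $0$, its inner portion enters some repelling petal $RP_{b_*}$ of $g_{b_*}$, and Lemma~\ref{l:rp} furnishes a continuously varying family $RP_b$ of repelling petals of $g_b$ for $b$ near $b_*$.

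Next, fix $\rho>1$ small enough that the arc $S_{b_*}=\phi_{b_*}^{-1}(\{re^{2\pi i\theta}:\rho\le r\le\rho^{3^N}\})$ lies inside $RP_{b_*}$. By Theorem~\ref{T:BH}, the arc $S_b=\phi_b^{-1}(\{re^{2\pi i\theta}:\rho\le r\le\rho^{3^N}\})$ is defined and Hausdorff-close to $S_{b_*}$, hence lies in $RP_b$, for $b$ near $b_*$. Because $g_b'(0)$ is close to $1$, $g_b$ is univalent on a fixed neighborhood of $0$ containing $RP_b$ and admits an inverse branch $h_b$ with $h_b(0)=0$; the standard description of repelling petals gives $h_b(RP_b)\subset RP_b$ and $h_b^{\circ k}(z)\to 0$ for every $z\in RP_b$. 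I then extend $\phi_b^{-1}$ inward via the recursion $\phi_b^{-1}(re^{2\pi i\theta})=h_b\bigl(\phi_b^{-1}(r^{3^N}e^{2\pi i\theta})\bigr)$ for $r\in[\rho^{1/3^N},\rho]$, and iterate to parametrize
$$
\Gamma_b=\bigcup_{k\ge 0}h_b^{\circ k}(S_b)
$$
by $\{re^{2\pi i\theta}:1<r\le\rho^{3^N}\}$.

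The main obstacle is certifying that $\Gamma_b$ really is the dynamic ray $R_b(\theta)$, i.e., that it is a trajectory of the gradient flow of $\mathfrak{g}_b$ rather than merely a formal preimage curve. This is clean: the functional equation $\phi_b\circ g_b=\phi_b^{3^N}$ forces the recursion above to coincide with the genuine analytic continuation of $\phi_b^{-1}$ along the straight radial segment $\{re^{2\pi i\theta}:r\in(1,\rho^{3^N}]\}$, so by uniqueness $\Gamma_b$ is precisely the inner portion of $R_b(\theta)$ and is an integral curve of $\nabla\mathfrak{g}_b$. A secondary concern is that some iterate $h_b^{\circ k}(S_b)$ might exit the locus of univalence of $g_b$, but this is ruled out by $h_b(RP_b)\subset RP_b$. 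Finally, since $h_b^{\circ k}(z)\to 0$ on $RP_b$, the curve $\Gamma_b$ has $0$ as its inner accumulation point, so $R_b(\theta)$ lands at $0$, as required.
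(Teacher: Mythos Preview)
Your proof is correct and follows the same overall strategy as the paper: combine the stability of ray segments at bounded potential (Theorem~\ref{T:BH}) with the continuous variation of repelling petals (Lemma~\ref{l:rp}) to trap the perturbed ray in a petal, whence it must land at $0$.

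The only substantive difference is bookkeeping. The paper invokes a result of Milnor (Theorem~18.13 of \cite{M}) to conclude that the period of $\theta$ under $\si_3$ is exactly $q$, so it can work directly with $f_b^{\circ q}$ and the coefficient $T_{p/q}(b)$. You instead take $N$ to be a common multiple of $q$ and the period of $\theta$, which forces you to use the iterated expansion $f_b^{\circ mq}(z)=z+m\,T_{p/q}(b)\,z^{q+1}+o(z^{q+1})$; this identity is elementary (and in fact appears verbatim later in the paper, in the proof of Property~(3) in Section~\ref{ss:thmb1}), so no harm done. Your final paragraphs then spell out in detail what the paper compresses into the sentence ``Dynamics inside $RP_b$ implies that $R_b(\theta)$ lands at $0$'': you make the analytic continuation of $\phi_b^{-1}$ along the radial segment explicit via the inverse branch $h_b$, and verify consistency with the functional equation. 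This is a welcome elaboration rather than a different argument.
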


\begin{proof}
By Lemma 10.1 of \cite{M}, the ray $R_{b_*}(\theta)$ must be tangent
to some repelling vector of $f_{b_*}^{\circ q}$ at $0$. Let
$RP_{b_*}$ be the corresponding repelling petal of $f_{b_*}^{\circ
q}$. The period of $\theta$ is equal to $q$ by Theorem 18.13 of
\cite{M}. There are two points $z_*$ and $f_{b_*}^{\circ q}(z_*)$ in
$R_{b_*}(\theta)$ that lie in the interior of $RP_{b_*}$. By Lemma
\ref{l:rp} for all $b$ sufficiently close to $b_*$, we can define a
repelling petal $RP_{b}$ of $f_{b}^{\circ q}$ that is close to
$RP_{b_*}$ in the Hausdorff metric.

Let $L_*$ denote the subray of the ray $R_{b_*}(\theta)$ from $z_*$
to infinity. By Theorem \ref{T:BH}, for every $\eps>0$, we can
choose a neighborhood $U$ of $b_*$ such that, for all $b\in U$, the
corresponding piece $L$ of $R_b$ is $\eps$-close to $L_*$ in the
Hausdorff metric. The number $\eps$ can be chosen so that this
implies that $L$ enters the corresponding petal $RP_b$. Dynamics
inside $RP_b$ implies that $R_b(\theta)$ lands at $0$.
\end{proof}

\subsection{The proof of inclusion $\ope\subset \cu$}\label{ss:thmb1}

By Theorem A, we have $\phd_3\subset \cu$. Consider a
two-dimensional domain $\Uc\subset\Fc_\la$ consisting of stable
polynomials such that for all $f\in\bd(\Uc)$, we have
$[f]\in\ol\phd_3$. We need to prove that classes of all polynomials
in $\Uc$ belong to $\cu$. Suppose that $f_*=f_{\la,b_*}\in \Uc$, and
show that then $f_*$ has properties (1)--(3) from
Definition~\ref{d:maincu}.

\medskip

\noindent\textbf{Property (1).} Clearly, $f_*$ has a fixed
non-repelling point $0$, thus property (1) is fulfilled.

\medskip

\noindent\textbf{Property (2).} Let us prove that $f_{*}$ has no
repelling cutpoints. Assume that $f_{*}$ has a repelling periodic
cutpoint $z_{b_*}$. The set $\ar_{f_*}(z_{b_*})$ of arguments of
external rays of $f_{*}$ landing at $z_{b_*}$ consists of at least
two angles. Since all maps in $\Uc$ are quasi-symmetrically
conjugate, it is easy to see (e.g., by Lemma 3.5 \cite{BOPT-QL})
that all maps $f_{\la,b}\in \Uc$ have repelling periodic cutpoints
$z_b$ corresponding to $z_{b_*}$. By Lemma~\ref{l:rep}
$\ar_{f_{\la,b}}(z_b)=\ar_{f_*}(z_{b_*})$.
Suppose that $\{\al,\be\}\subset\ar_{f_*}(z_{b_*})$.

Let $\Lambda$ be the set of all parameter values $b$ with
$f_{\la,b}\in\Uc$, and choose a sequence $b_n\to b'\in
\bd(\Lambda)$. We may assume that $z_{b_n}\to z_{b'}$, where
$z_{b'}$ is a non-attracting periodic point of $f_{\la,b'}$. If both
rays $R_{b'}(\al)$, $R_{b'}(\be)$ land at \emph{repelling} periodic
points, then these landing points must coincide as otherwise by
Lemma~\ref{l:rep} we get a contradiction with the fact that
$R_{b_n}(\al)$, $R_{b_n}(\be)$ land at $z_{b_n}$ and $z_{b_n}\to
z_{b'}$. However, by Theorem A, the map $f_{\la,b'}$ does not have
repelling periodic cutpoints. Hence one of the rays $R_{b'}(\al)$,
$R_{b'}(\be)$ lands at a parabolic periodic point. Clearly, for at
most finitely many parameter values $b'\in \bd(\Lambda)$ the rays
$R_{b'}(\al)$ or $R_{b'}(\be)$ land at a parabolic point distinct
from $0$. Assume that for infinitely many $b'\in \bd(\Lambda)$ the
rays $R_{b'}(\al)$ land at $0$ which is a parabolic fixed point:
$\lambda=\exp(2\pi ip/q)$ for some relatively prime $p$ and $q$.

Let us show that in the above case $T_{p/q}(b')=0$. Indeed,
arbitrarily close to $b'$, there are parameter values $b$, for which
$R_b(\al)$ does not land at $0$. It follows from Proposition
\ref{p:ray0stable} that $T_{p/q}(b')=0$. However, the polynomial
$T_{p/q}$ has only finitely many roots, a contradiction.

\medskip

\noindent\textbf{Property (3).} Suppose that $f_{*}$ has a
non-repelling $n$-periodic point $z_{b_*}\ne 0$ with multiplier not
equal to 1. Since $f_{*}$ is stable, the corresponding periodic
point $z_b\ne 0$ of $f_b$, $b\in \Lambda$, is $n$-periodic and
non-repelling. If $b\to b'\in \bd(\Lambda)$, then $z_b\to z_{b'}$
where $z_{b'}$ is a non-repelling $f^{\circ n}_{b'}$-fixed point.
Consider two cases. First, suppose that $z_{b'}\ne 0$. Then by
Theorem A the multiplier at $z_{b'}$ is $1$.
There are only finitely many values of $b'$, for which this can happen.
\begin{comment}
Since $f^{\circ n}(z_{b'})=z_{b'}$, it follows that there are finitely many such
parameters $b'$.
\end{comment}
Second, suppose that $z_{b'}=0$.
%It follows that
%$0$ is a parabolic fixed point of $f_{b'}$ because
We have $f_b^{\circ n}(z)-z=z(z-z_b)Q_b(z)$ for some polynomial $Q_b$,
whose coefficients are algebraic functions of $b$ that have no poles in $\C$.
We obtain in the limit as $b\to b'$ that
$f_{b'}^{\circ n}(z)-z=z^2Q_{b'}(z)$, hence $0$ is a parabolic fixed point of $f_{b'}$.
We may assume that the multiplier at $0$ is $e^{2\pi ip/q}$. Let us show that then $0$ is a
degenerate parabolic point (i.e., that $T(b')=0$ where $T=T_{p/q}$
is the polynomial introduced in Proposition~\ref{P:Tpq}).

Indeed, $n=mq$ is a multiple of $q$, and as in
Proposition~\ref{P:Tpq} by the Petal Theorem $f_b^{\circ
q}(z)=z+T(b)z^{q+1}+o(z^{q+1})$. It is easy to see by induction that
then for any $k$ we have $f_b^{\circ
kq}(z)=z+kT(b)z^{q+1}+o(z^{q+1})$. On the other hand, as above
$f_b^{\circ mq}(z)-z=z^{q+1}(z-z_b)R_b(z)$ where $R_b(z)$ is a
polynomial of $z$ whose coefficients are algebraic functions of $b$ that
have no poles in $\C$.
Hence in the limit we have $f_{b'}^{\circ mq}(z)-z=z^{q+2}R_{b'}(z)$.
It follows that $mT(b')z^{q+1}+o(z^{q+1})=z^{q+2}R_{b'}(z)$ and hence
$mT(b')+o(1)=zR_{b'}(z)$, which implies that $T(b')=0$ as desired.
Clearly, there are finitely many such values of $b'$. Thus, we
showed that overall there are only finitely many values of $b'$ to
which $b$ may converge, a contradiction with $\bd(\Lambda)$ being
infinite.

\section{Laminations associated to polynomials}\label{s:lamisec}

If $X\subset \C$ is a continuum, let $U_\infty(X)$ be the unbounded
component of $\C\sm X$. If $X=\bd(U_\infty(X))$, we call
$X$ \emph{unshielded}. %Laminations help describe unshielded continua.
A continuous map $\vp:Y\to Z$ is \emph{monotone} if all fibers are
continua. Let $A$ be a continuum. A monotone onto map $\vp:A\to
Y_{\vp, A}$ with locally connected $Y_{\vp, A}$ is called a
\emph{finest (monotone) map} if for any monotone map $\psi:A\to L$
onto a locally connected continuum $L$ there is a map $h:Y_{\vp,
A}\to L$ with $\psi=h\circ \vp$ (then $h$ is monotone because for
$x\in L$, we have $h^{-1}(x)=\vp(\psi^{-1}(x))$). If $\vp:A\to B$,
$\vp':A\to B'$ are two finest maps, then the map associating points
$\vp(x)\in B$ and $\vp'(x) \in B'$ for every $x\in A$ is a
homeomorphism between $B$ and $B'$. Hence all sets $Y_{\vp, A}$ are
homeomorphic, all finest maps $\vp$ are the same up to a
homeomorphism, and we can talk of \emph{\textbf{the} finest model
$Y_A=Y$ of $A$} and \emph{\textbf{the} finest map $\vp_A=\vp$ of $A$
onto $Y$}. Recall that given a lamination $\sim$ the corresponding
quotient map from $\uc$ to $\uc/\sim$ is denoted by $p_\sim$.

\begin{thm}[Theorem 1 \cite{bco11}]\label{t:statproje}
  Let $Q$ be an unshielded continuum. Then there exist the finest
  map $\vp$ and the finest model $Y$ of $Q$ given by a lamination
  $\sim_Q$ on $\uc$ so that $Y=\uc/\sim_Q$; moreover, $\vp$ can be extended to
  a map $\vp:\C\to \C$ which collapses only those complementary
  domains to $Q$ whose boundaries are collapsed by $\vp$, and is a
  homeomorphism elsewhere in $\C\sm Q$. For $y\in Y$ the fiber
  $\vp^{-1}(y)$ coincides with the topological hull of the union of impressions
  of all external to $Q$ rays with arguments from the set
  $p_{\sim_Q}^{-1}(y)$.
\end{thm}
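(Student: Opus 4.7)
The plan is to construct $\sim_Q$ as the equivalence relation on $\uc$ that encodes the finest monotone ``locally connected model'' of $Q$, and then to read off the lamination axioms and fibre description from Carath\'eodory prime-end theory. Because $Q$ is unshielded, $Q=\bd(U_\infty(Q))$, so the Riemann map $\Psi:\C\sm\ol\disk\to U_\infty(Q)$ normalized at $\infty$ supplies external rays $R(\ta)$ and impressions $\mathrm{Imp}(\ta)\subset Q$ for every $\ta\in\uc$.

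First I would build the finest decomposition abstractly: for $x\in Q$ let $F(x)$ be the intersection, over all monotone surjections $\psi:Q\to L$ onto locally connected continua $L$, of the fibres $\psi^{-1}(\psi(x))$. The family $\{F(x):x\in Q\}$ is an upper semicontinuous decomposition of $Q$. The key point is to show that its quotient $Y$ is again locally connected, whence $\vp_Q:Q\to Y$ is itself a monotone surjection onto a locally connected continuum and so, by minimality, factors any such $\psi$ --- giving both existence and uniqueness of a finest map up to homeomorphism. This local connectedness is the main analytic obstacle, since an arbitrary infimum of upper semicontinuous decompositions with locally connected quotients need not have a locally connected quotient; the proof must exploit planarity of $Q$ together with prime-end geometry, approximating $F(x)$ by the nested family of fibres of maps obtained by collapsing finite unions of impressions along arcs of $\uc$.

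Next, I would transport the decomposition to $\uc$ by declaring $\ta_1\sim_Q\ta_2$ iff $\mathrm{Imp}(\ta_1)$ and $\mathrm{Imp}(\ta_2)$ lie in the same $F(x)$. Axiom (E1) follows from the joint upper semicontinuity of $x\mapsto F(x)$ and $\ta\mapsto\mathrm{Imp}(\ta)$. Axiom (E2) is forced by planarity: the cyclic order of $\uc$ matches the cyclic order of accesses to $\bd(U_\infty(Q))$, so two pairs of arguments whose chords cross cannot lie in disjoint fibres without violating upper semicontinuity. Axiom (E3) holds because each $\sim_Q$-class is a closed nowhere dense set of accesses into a single fibre (in fact its convex hull contains no arc of $\uc$, since an arc would correspond to a nondegenerate subcontinuum of $\bd(U_\infty(Q))$ crushed by $\vp_Q$, forcing a smaller locally connected quotient). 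The induced continuous bijection $\uc/\sim_Q\to Y$ between compacta is then a homeomorphism.

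Finally, for the fibre description and the extension to $\C$: one inclusion, $\vp_Q^{-1}(y)\supset\bigcup_{\ta\in p_{\sim_Q}^{-1}(y)}\mathrm{Imp}(\ta)$, is immediate from the definition of $\sim_Q$; the reverse inclusion up to topological hull follows because any point of $\vp_Q^{-1}(y)$ lying outside that hull would be accessible from $\infty$ through a ray whose argument, by the construction of $\sim_Q$, must itself lie in $p_{\sim_Q}^{-1}(y)$. I would then extend $\vp_Q$ over each bounded component $W$ of $\C\sm Q$: if $\bd W$ is collapsed by $\vp_Q$ to a single point, collapse $\ol W$ as well; otherwise extend $\vp_Q\restriction\bd W$ as a homeomorphism of $W$ onto its image by standard planar surgery. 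Applying Moore's theorem to the resulting upper semicontinuous decomposition of $\C$ yields a continuous monotone map $\vp:\C\to\C$ that is a homeomorphism off $Q$ together with the collapsed domains, as required.
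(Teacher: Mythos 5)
First, a point of order: the paper does not prove this statement at all --- it is quoted verbatim as Theorem~1 of \cite{bco11} and used as a black box, so there is no in-paper proof to compare your argument against. Judged on its own merits, your proposal assembles the right ingredients (prime ends, impressions, unlinkedness from planarity, Moore's theorem), but it has a genuine gap exactly at the heart of the theorem. You define $F(x)$ as the intersection of the fibres $\psi^{-1}(\psi(x))$ over \emph{all} monotone surjections $\psi$ onto locally connected continua and assert that $\{F(x)\}$ is an upper semicontinuous decomposition. But an intersection of continua sharing a point need not be connected (two arcs meeting at two points already fail), so it is not clear that $F(x)$ is a continuum, i.e.\ that your candidate finest map is even monotone; and, as you yourself concede, even granting that, the local connectedness of the quotient is not automatic --- for general continua a finest monotone locally connected model simply does not exist, so unshieldedness must enter in an essential way. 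Your proposal defers both issues to ``planarity and prime-end geometry,'' which is precisely the content of the theorem. The actual construction in \cite{bco11} goes the other way around: one builds $\sim_Q$ directly on $\uc$ as a suitable closed equivalence generated by declaring angles with non-disjoint impressions equivalent, verifies the lamination axioms, and only then proves that the resulting quotient is locally connected and that the induced map is finest; the abstract infimum over all $\psi$ is a consequence, not the starting point.

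Two smaller points. In your fibre description, the reverse inclusion is justified by saying a point outside the hull ``would be accessible from $\infty$ through a ray''; points of $Q$ need not be accessible. The correct statement is that every point of an unshielded continuum lies in the impression of some prime end, and that every monotone map onto a locally connected continuum collapses each impression to a point, whence each impression lies in a single fibre and its argument lies in the corresponding $\sim_Q$-class. Also, in the extension over a bounded complementary domain $W$ whose boundary is not collapsed, you cannot extend $\vp\restriction\bd(W)$ ``as a homeomorphism onto its image'' since that restriction is generally not injective; one must instead include $\ol W$ appropriately into the upper semicontinuous decomposition of $\C$ and let Moore's theorem produce a map that is a homeomorphism on the open domain $W$ itself.
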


By a \emph{finest map} we mean any extension  of the finest map of
$Q$ over $\C$.
\begin{comment}
It is easy to see that if a leaf $\ol{ab}$ is such that
$\si^n(a)=b, \si^n(b)=a$ for some $n$ then $\{a, b\}$ is a
$\sim$-class. Indeed, otherwise the convex hull $G$ of the
$\sim$-class of $\{a, b\}$ is a gap with an edge $\ol{ab}$. Its
$\si^n$-image is then a gap located on the other side of $\ol{ab}$
while it must be equal to $G$, a contradiction.
\end{comment}

\begin{dfn}[Critical leaves and gaps]\label{d:crit}
A leaf of a lamination $\sim$ is called \emph{critical} if its
endpoints have the same image. A gap $G$ is said to be
\emph{critical} if $\si_d|_{G\,'}$ is at least $k$-to-$1$ for some $k>1$.
\end{dfn}

Lemma~\ref{l:crepe} is well known; we state it here without a proof.

\begin{lem}\label{l:crepe}
An edge of a periodic gap is either (pre)critical or (pre)periodic.
\end{lem}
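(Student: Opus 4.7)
Let $G$ be a periodic gap of period $k$, and let $\ell$ be an edge of $G$. The plan is to iterate $\ell$ forward under $\sigma_d$: the orbit either encounters a degenerate leaf (forcing the (pre)critical alternative) or stays in non-degenerate edges of $G$ and eventually cycles (forcing the (pre)periodic alternative).

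First, if for some $n\ge 0$ the leaf $\sigma_d^n(\ell)$ is degenerate (a single point of $\uc$), then by Definition~\ref{d:crit} the leaf $\sigma_d^n(\ell)$ is critical, so $\ell$ is (pre)critical and we are done. Assume henceforth that $\sigma_d^n(\ell)$ is non-degenerate for every $n\ge 0$. Since $\sigma_d^k(G')=G'$, the iterates $\ell_n:=\sigma_d^{nk}(\ell)$ all lie among the edges of $G$, and it suffices to show that $\{\ell_n\}$ is eventually periodic.

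For each edge $\ell'$ of $G$, let $A(\ell')$ denote the open arc of $\uc\sm G'$ subtended by $\ell'$. These arcs are pairwise disjoint, so $\sum |A(\ell')|\le 1$ in normalized Lebesgue measure, and in particular only finitely many edges are \emph{long} in the sense that $|A(\ell')|\ge d^{-k}$. If $\ell'$ is \emph{short}, i.e.\ $|A(\ell')|<d^{-k}$, then $\sigma_d^k$ restricts to a homeomorphism on the closure of $A(\ell')$, carrying it onto an arc of length $d^k|A(\ell')|$ whose endpoints are the endpoints of $\sigma_d^k(\ell')\in G'$; using the orientation-preserving covering property of $\sigma_d$ together with $\sigma_d^k(G')=G'$, this image arc must coincide with $A(\sigma_d^k(\ell'))$. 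Hence $|A|$ is multiplied by $d^k$ under $\sigma_d^k$ so long as the edge remains short, and the orbit reaches a long edge in at most $\lceil \log_{d^k}(d^{-k}/|A(\ell)|)\rceil$ steps.

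From that point on, every excursion into short edges lasts only finitely many iterates before returning to a long edge, so the orbit visits the finite set of long edges infinitely often. By pigeonhole some long edge is revisited, giving $\ell_{n_0}=\ell_{n_1}$ with $n_0<n_1$, so $\ell_{n_0}$ is $\sigma_d^k$-periodic and $\ell$ is (pre)periodic. The main obstacle I expect is the precise claim that for a short edge the image arc $\sigma_d^k(A(\ell'))$ is exactly $A(\sigma_d^k(\ell'))$ and not the complementary arc through $G'$; this requires combining the injectivity of $\sigma_d^k$ on short arcs with the identity $\sigma_d^k(G')=G'$ to rule out the image arc straying through $G'$. Once that is settled, the length-growth and pigeonhole steps are routine.
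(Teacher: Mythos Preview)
The paper does not actually prove Lemma~\ref{l:crepe}; it states that the result is well known and omits the proof entirely. So there is nothing in the paper to compare your argument against, and what you have written is a genuine proof where the paper offers none. Your approach---iterating the edge, catching the (pre)critical case when an iterate degenerates, and otherwise using hole-length growth plus pigeonhole on the finitely many long edges---is the standard one.

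Two small points. First, your handling of the degenerate case is phrased oddly: if $\si_d^{n}(\ell)$ is a point, it is $\si_d^{n-1}(\ell)$ (with $n\ge 1$, since an edge is non-degenerate) whose endpoints share an image, and that is the leaf you should call critical. The conclusion that $\ell$ is (pre)critical is unaffected.

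Second, the obstacle you flag is genuine but resolvable, and your instinct about how to resolve it is right. The missing ingredient is \emph{gap invariance}: for an invariant geo-lamination the map $\si_d^{k}:\bd G\to\bd G$ is an orientation-preserving covering of the Jordan curve $\bd G$. Traversing $\bd G$ positively, one crosses the edge $\overline{ab}$ from $a$ to $b$ (where $(a,b)$ is the hole). Orientation preservation then forces the image edge $\overline{a'b'}$ to be crossed from $a'=\si_d^{k}(a)$ to $b'=\si_d^{k}(b)$ in the positive direction of $\bd G$, which means the hole of $\overline{a'b'}$ is the positively oriented arc $(a',b')$. When $|A(\ell')|<d^{-k}$, this arc is exactly $\si_d^{k}(A(\ell'))$, giving $|A(\si_d^{k}(\ell'))|=d^{k}|A(\ell')|$ as you need. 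The same gap-invariance property is what guarantees (implicitly in your argument) that non-degenerate images of edges of $G$ are again edges of $G$. With these standard facts in hand, your growth-and-pigeonhole argument goes through cleanly.
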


Laminations give a full and exact description of the dynamics of
complex polynomials with locally connected Julia set \cite{thu85}.
In general, weaker results can be proven \cite{kiwi97, bco11}.
Namely, in \cite{bco11} Theorem~\ref{t:statproje} is applied to
polynomials with connected Julia set which yields
Theorem~\ref{t:dynproje} (a similar earlier result is due to Kiwi
\cite{kiwi97}).

\begin{thm}[\cite{bco11}, Theorem 2]\label{t:dynproje}
  Let $f$ be a complex polynomial with connected Julia set
  and finest lamination $\sim_f=\sim_{J(f)}$. Then there exists a
  topological polynomial $f_{\sim_f}:\C\to \C$ and a finest map
  $\vp_f:\C\to \C$ which semiconjugates $f$ and $f_{\sim_f}$. If
  $x\in J_{\sim_f}$ corresponds to a finite periodic $\sim_f$-class
  $p^{-1}_{\sim_f}(x)$ then the fiber $\vp_f^{-1}(x)$ is a point. No
  periodic Fatou domain of $f$ of degree greater than $1$ is
  collapsed by $\vp_f$.
\end{thm}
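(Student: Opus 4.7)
The plan is to verify the three requirements placed by Definition~\ref{d:cubioid} on the pair $(\sim_f,\lam_f)$: that $\sim_f$ has at most one rotational set, that every periodic non-degenerate leaf of $\sim_f$ has an attached Fatou gap whose basis is not a single $\sim_f$-class, and that $\lam_f=\lam_{\sim_f}$. Throughout I will use the three conditions defining $\cu$ together with Theorem~\ref{t:dynproje}.

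For the rotational-set condition, I will argue that a rotational set of $\sim_f$ is either a Siegel gap or a finite periodic class with nonzero combinatorial rotation number, and that each such set, under $\vp_f$, corresponds to a periodic point $q\in J(f)$ which (i) is non-repelling, because multiple external rays landing at a repelling periodic point exhibit it as a repelling periodic cutpoint of $J(f)$, forbidden by condition (2), and (ii) has multiplier different from $1$, because multiplier $1$ forces trivial combinatorial rotation and excludes Siegel dynamics. By condition (3), the only such $q$ is a single fixed point, and a single fixed point supports at most one rotational set (a Siegel disk gives one Siegel gap, a parabolic pattern gives one rotational polygon, and the two cases are mutually exclusive).

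For the periodic-leaf condition, let $\ell$ be a periodic non-degenerate leaf of $\sim_f$. The endpoints of $\ell$ are periodic angles whose external rays share a common landing point $p\in J(f)$, so $p$ is a cutpoint of $J(f)$; condition (2) forces $p$ to be non-repelling, and hence parabolic. Then $p$ sits on the boundary of an attached periodic Fatou domain $U$ whose immediate basin contains a critical point, so $U$ has degree greater than one, and by Theorem~\ref{t:dynproje} it is not collapsed by $\vp_f$. Consequently $\vp_f(U)$ is a genuine Fatou domain in $\C\setminus J_{\sim_f}$ corresponding to a Fatou gap $G$ of $\sim_f$ attached to $\ell$. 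If the basis of $G$ were a single $\sim_f$-class $C$, then $\vp_f$ would collapse the entire boundary $\bd U$ to one point while leaving the open disk $U$ uncollapsed; the image of $\ol U$ in $\C/\!\sim_f$ would then be a topological sphere glued to the rest of the plane at a single point, contradicting the Moore-theorem identification $\C/\!\sim_f\cong\C$. Therefore the basis of $G$ is not one $\sim_f$-class.

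For the equality $\lam_f=\lam_{\sim_f}$, recall that $\lam_f$ differs from $\lam_{\sim_f}$ only by leaves and finite gaps coming from periodic cuts whose vertices lie in fibers associated to infinite $\sim_f$-classes, together with their pullbacks. The vertex of any such periodic cut is a periodic point at which two external rays land; by the same argument as above, condition (2) rules out a repelling vertex, so every periodic-cut vertex is parabolic. In the absence of repelling periodic cutpoints, the finest monotone quotient does not lump extra angles together with the finitely many rays landing at a parabolic point, so the $\sim_f$-class at such a vertex is the finite set of its landing angles. Therefore no periodic cut has its vertex inside a fiber coming from an infinite $\sim_f$-class, and no leaves are adjoined. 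Combining the three parts gives that $(\sim_f,\lam_f)$ is a cubioidal laminational pair. The main obstacle is the last paragraph: the precise statement that parabolic cut-vertices in $\cu$ live in finite-class fibers is a structural fact about the fibers of the finest monotone quotient whose verification will rely carefully on the results of \cite{bco11} on periodic cuts and on Theorem~\ref{t:dynproje}.
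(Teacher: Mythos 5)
Your proposal does not prove the statement it was asked to prove. The statement in question is Theorem~\ref{t:dynproje}: the existence of the topological polynomial $f_{\sim_f}$ and the finest semiconjugacy $\vp_f$, the fact that fibers over finite periodic $\sim_f$-classes are points, and the fact that periodic Fatou domains of degree greater than $1$ are not collapsed. What you have written is instead an argument for Theorem~C (that $[f]\in\cu$ implies $(\sim_f,\lam_f)$ is a cubioidal laminational pair). These are entirely different claims: Theorem~\ref{t:dynproje} is a structural result about the finest locally connected model of an arbitrary polynomial with connected Julia set, with no hypothesis involving $\cu$ at all, whereas your argument is organized around verifying Definition~\ref{d:cubioid} under the hypothesis $[f]\in\cu$. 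Note also that the paper does not prove Theorem~\ref{t:dynproje}; it is imported verbatim from \cite{bco11} (Theorem~2 there), obtained by applying the general finest-model machinery of Theorem~\ref{t:statproje} to $J(f)$.

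Beyond the mismatch of target, your argument is circular as a purported proof of Theorem~\ref{t:dynproje}: you invoke that very theorem repeatedly (to identify rotational sets with non-repelling periodic points, to conclude that degree-$>1$ Fatou domains are not collapsed and hence give Fatou gaps attached to periodic leaves). None of the content you would actually need --- constructing $\vp_f$ from the finest monotone map of the unshielded continuum $J(f)$, showing the induced map on the quotient is a topological polynomial, proving fibers over finite periodic classes are singletons, and proving non-collapse of higher-degree periodic Fatou domains --- appears in your write-up. If the intent was to prove Theorem~C, the argument is closer in spirit to the paper's Section~\ref{s:thmC}, but even there it is incomplete: the paper's proof of $\lam_f=\lam_{\sim_f}$ (Lemma~\ref{l:lamnotlam}) must handle the genuinely possible case of a parabolic cutpoint inside a periodic CS-fiber, which it excludes via Thurston's classification of invariant gaps of the induced quadratic lamination and the Fatou--Shishikura inequality; your last paragraph simply asserts that parabolic cut-vertices cannot lie in infinite-class fibers, which is precisely the point that requires proof.
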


We need the following definition.

\begin{dfn}\label{d:ingacsfib}
Call gaps \emph{finite} or \emph{infinite} if
their bases are finite or infinite; infinite $\sim$-classes
have infinite gaps as their convex hulls (such gaps will be called
\emph{infinite gap-classes}). By \cite{bl02} all such gaps are
(pre)periodic, and periodic infinite gap-classes are Fatou gaps of
degree greater than 1. Call the corresponding fibers
\emph{CS-fiber}. Thus, if $x\in J_{\sim_f}$ corresponds to an
infinite gap-class $p^{-1}_{\sim_f}(x)$ then the $x$-fiber
$\vp^{-1}_f(x)$ is said to be a \emph{CS-fiber}.
\end{dfn}

The following lemma explains the terminology.

\begin{lem}[\cite{BOPT-QL}, Proposition 4.4]\label{l:4.4}
A periodic CS-fiber contains either a Cremer point or a Siegel
point.
\end{lem}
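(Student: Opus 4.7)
The plan has three main phases. First, I unpack the hypotheses: $F=\vp_f^{-1}(x)$ is a periodic CS-fiber where $x\in J_{\sim_f}$ is fixed under $f_{\sim_f}^{\circ n}$ for its minimal period $n$, the class $A=p_{\sim_f}^{-1}(x)$ is infinite and periodic, and its convex hull $G=\ch(A)$ is a Fatou gap of degree $k\ge 2$ under $\si_3^{\circ n}$. By Theorem~\ref{t:statproje}, $F$ is a non-degenerate full continuum in the plane, and the semiconjugacy $\vp_f\circ f=f_{\sim_f}\circ \vp_f$ combined with $f_{\sim_f}^{\circ n}(x)=x$ gives $f^{\circ n}(F)\subseteq F$.

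Second, I would locate a non-repelling periodic point inside $F$. Because $G$ has degree $k\ge 2$, combinatorially it absorbs a critical chord of $\si_3^{\circ n}$, which by the finest-lamination correspondence forces the existence of at least one critical point $c$ of $f$ whose forward $f^{\circ n}$-orbit is trapped in the $\vp_f$-saturated compact union $F\cup f(F)\cup\cdots\cup f^{\circ n-1}(F)$. Since this orbit is bounded, it accumulates on a periodic cycle $\Ga$ of $f$, and some iterate of $\Ga$ lies in $F$. This accumulated cycle $\Ga$ cannot be repelling: a repelling periodic point is the landing of only finitely many dynamic rays, producing a finite $\sim_f$-class and hence a point fiber by Theorem~\ref{t:dynproje}, which is incompatible with $F$ being the fiber over the infinite class $A$. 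Hence $F$ contains a non-repelling periodic point $p$ of $f$.

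Third, I would eliminate the attracting, super-attracting, and parabolic cases for $p$. In each of these three cases, the immediate basin $U$ of $p$ under $f^{\circ n}$ contains a critical point of $f$, so the return map on $U$ has degree $\ge 2$; Theorem~\ref{t:dynproje} then guarantees that $U$ is not collapsed by $\vp_f$. On the other hand, every $y\in U$ satisfies $f^{\circ nk}(y)\to p$ as $k\to\infty$ (entering a petal in the parabolic case), so by continuity of $\vp_f$ and the semiconjugacy, $f_{\sim_f}^{\circ nk}(\vp_f(y))\to x$ for each $y\in U$. Since $x$ lies in the topological Julia set $J_{\sim_f}$, and a topological polynomial induced by a $\si_d$-invariant lamination admits no open basin of attraction at a periodic point of its Julia set (a consequence of $\si_d$ being expanding on $\uc$), this forces $\vp_f|_U\equiv x$, i.e. $U$ is collapsed --- a contradiction. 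The only remaining non-repelling options are Cremer and Siegel, yielding the claim.

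The main obstacle is the second phase. Producing a non-repelling periodic point inside $F$ requires rigorously linking the combinatorial datum $k\ge 2$ for $G$ to the presence of a genuine critical point of $f$ whose orbit is trapped in the $\vp_f$-cycle of $F$, and showing that its accumulation set meets $F$ at a periodic cycle. This synthesis of the finest-lamination framework of Theorem~\ref{t:dynproje} with a critical-portrait argument for $f$ is where the delicate work lies; the elimination step in the third phase is comparatively routine once a non-repelling $p\in F$ is in hand.
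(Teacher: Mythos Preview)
The paper does not prove this lemma; it is quoted verbatim from \cite{BOPT-QL}, Proposition~4.4, so there is no in-paper argument to compare against. I can only assess your proposal on its own merits.

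Your second phase contains a genuine gap. From the boundedness of the forward orbit of a critical point $c$ trapped in the cycle $F\cup f(F)\cup\dots\cup f^{\circ n-1}(F)$ you conclude that the orbit ``accumulates on a periodic cycle $\Gamma$ of $f$''. This is false in general: the $\omega$-limit set of a bounded orbit is a nonempty compact invariant set, but there is no reason it should be a periodic orbit. For a polynomial with a Cremer point, for instance, the critical orbit typically accumulates on a complicated continuum, not on a cycle. So this step does not produce a periodic point in $F$.

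The follow-up argument that the putative periodic point cannot be repelling is also muddled. You write that a repelling periodic point ``produces a finite $\sim_f$-class and hence a point fiber by Theorem~\ref{t:dynproje}, which is incompatible with $F$ being the fiber over the infinite class $A$''. But Theorem~\ref{t:dynproje} says that the fiber \emph{of a finite periodic class} is a point; it does not say that every repelling periodic point is itself a full fiber. A repelling periodic point can perfectly well sit inside a large fiber $F$ --- indeed, the construction of $\lam_f$ in Section~\ref{s:lamisec} is designed precisely to record repelling periodic cutpoints that live inside CS-fibers and are invisible to $\sim_f$. So even if you had a periodic point in $F$, this reasoning would not rule out the repelling case. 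A correct approach has to locate a non-repelling periodic point in $F$ by other means --- for example, a fixed-point-index or non-separating-continuum argument as in \cite{bfmot12}, combined with the fact (from Theorem~\ref{t:dynproje}) that periodic Fatou domains of degree $>1$ are not collapsed, to exclude the attracting and parabolic possibilities; your third phase is closer to this, and is the sound part of the outline.
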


The drawback of using the lamination $\sim_f$ to model the dynamics
of $f$ is that $\sim_f$ may incompletely reflect the properties of
repelling periodic cutpoints of $J(f)$. In a lot of cases this does
not happen. Indeed, if all $\sim_f$-classes are finite then by
Theorem~\ref{t:dynproje} there is a one-to-one correspondence
between repelling and parabolic periodic cutpoints of $f$ and their
preimages on the one hand and the (pre)periodic non-degenerate
classes of $\sim_f$. However in the case when some $\sim_f$-classes
are infinite this may no longer be the case.

E.g., suppose that a cubic polynomial $f$ has a fixed repelling
point $0$ at which $R_f(0)$ and $R_f(\frac12)$ land, and no more
repelling periodic cutpoints. Moreover, suppose that in each
``half-plane'' created by the cut $R_f(0)\cup \{0\}\cup R_f(\frac12)$
there is a fixed Cremer point. Denote these fixed Cremer points by
$a$ and $b$. Then a standard argument (see, e.g., \cite{M03}) shows
that there are two quadratic-like Julia sets in $J(f)$, namely $J_a$
(containing $a$) and $J_b$ (containing $b$). Each of them
corresponds to a quadratic Julia set with a Cremer fixed point,
and by \cite{bo06} the only monotone map of $J_a$ ($J_b$) onto a
locally connected continuum is a collapse to one point. It follows
that the only monotone map of $J$ onto a locally connected continuum
is a collapse to one point. Hence the lamination $\sim_f$ identifies
all points of the circle and misses the fact that $f$ has a fixed
repelling cutpoint $0$.

Let $F$ be a fiber associated with an infinite $\sim_f$-class. We
saw that $F$ may contain periodic repelling points cutting $F$ such
that the corresponding leaves are not included in $\lam_{\sim_f}$.
By Proposition 40 \cite{bco11} there are at most finitely many
repelling or parabolic cutpoints in $F$. To each such point $x$ we
associate the convex hull of the set $\ar_f(x)$. We add the edges of
such convex hulls to $\lam_{\sim_f}$. Then we add to $\lam_{\sim_f}$
the edges of gaps corresponding to preimages of such points.
Finally, we take the limit leaves of this family of leaves and add
them to $\lam_{\sim_f}$. This creates a new geo-lamination $\lam_f$
called the \emph{geo-lamination generated by $f$}. In this way we
combine $\lam_{\sim_f}$ with the \emph{rational lamination} defined
by Kiwi in \cite{kiwi97}. In $\lam_f$ we will distinguish between
Fatou gaps corresponding to non-degenerate Fatou domains of $f$,
infinite gap-classes $G$, and infinite gaps $H$ of $\sim_f$ such
that $H\cap \uc$ is one $\sim_f$-class subdivided by finitely many
finite gaps or leaves and their preimages as in the definition of
$\lam_f$.

\section{Proof of Theorem C}
\label{s:thmC}

Lemma~\ref{l:emptylam} deals with the case when $\lam_f$
has only degenerate leaves.
We will assume that $0$ is a fixed point of $f$.

\begin{lem}\label{l:emptylam}
If all leaves of $\lam_f$ are degenerate then $[f]\in \cu$, and
$0$ is the unique non-repelling periodic point of $f$.
Moreover, if $\sim_f$ consists of one class, then $0$ is a Cremer or
Siegel fixed point.
\end{lem}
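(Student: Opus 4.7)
The plan is a case split on $\sim_f$. Because every non-singleton $\sim_f$-class contributes the edges of its convex hull as non-degenerate leaves of $\lam_f$, the hypothesis forces $\sim_f$ to be either trivial (all classes singletons) or degenerate (the whole of $\uc$ is one class). In either scenario $f$ has no repelling or parabolic periodic cutpoints, since such a cutpoint $x$ would contribute the non-degenerate edges of $\ch(\ar_f(x))$ to $\lam_f$.

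In the trivial case Theorem~\ref{t:statproje} makes $\vp_f$ a homeomorphism of $\C$, so $J(f)$ is a Jordan curve, $K(f)$ is a Jordan disk, and $[f]\in\phd_3\subset\cu$. The unique bounded Fatou component is invariant and contains both critical points, hence houses an attracting fixed point; the two remaining fixed points lie on $J(f)$ and are repelling. Under our standing convention that $0$ is the non-repelling fixed point of $f\in\Fc_\la$, this attracting point is $0$, and it is the unique non-repelling periodic point.

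The serious case is the degenerate one. Then $J_{\sim_f}$ is a single point, whose complement has no bounded component. So any attracting or parabolic cycle of $f$ would supply a periodic Fatou component of degree at least two (it contains a critical point) that by Theorem~\ref{t:dynproje} cannot be collapsed by $\vp_f$ and would therefore have to map to a non-existent bounded complementary component of $J_{\sim_f}$, a contradiction. Thus every non-repelling periodic point of $f$ is Cremer or Siegel, which proves the ``moreover'' clause for $0$.

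Uniqueness of $0$ among non-repelling periodic points then follows from a ray-counting argument, which I expect to be the main obstacle. Since there are no repelling or parabolic cutpoints, every repelling or parabolic $f$-periodic point has exactly one landing ray (at least one by Douady--Hubbard for connected $J(f)$, at most one by the non-cutpoint condition). A $\si_3$-orbit of rays of exact period $p$ can only land on an $f$-cycle of exact period $p$ (otherwise some cycle point would receive $p/d\ge 2$ rays, producing a cutpoint), delivering exactly one ray per cycle point. For $p\ge 2$ the cubic identity
\[
\#\bigl\{\text{period-exact-}p\ \si_3\text{-orbits}\bigr\}=\#\bigl\{\text{period-exact-}p\ f\text{-cycles}\bigr\}=\frac{1}{p}\sum_{d\mid p}\mu(p/d)\,3^d
\]
forces a bijection between orbits and cycles, leaving no room for a Cremer or Siegel cycle of period $\ge 2$. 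For $p=1$ the discrepancy between three fixed points of $f$ and two fixed rays leaves exactly one fixed point with no landing ray, and this must be $0$. Conditions (1)--(3) of Definition~\ref{d:maincu} are now immediate (fixed non-repelling point; no repelling cutpoints; condition (3) vacuous since $0$ is the only non-repelling periodic point), so $[f]\in\cu$.
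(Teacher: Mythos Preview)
Your proof is correct, and the overall architecture---the case split into trivial versus degenerate $\sim_f$, the exclusion of attracting and parabolic cycles via Theorem~\ref{t:dynproje}, and the observation that the construction of $\lam_f$ forces the absence of repelling periodic cutpoints---matches the paper's argument closely. One small imprecision: Theorem~\ref{t:statproje} alone does not literally say that trivial $\sim_f$ forces $\vp_f$ to be a homeomorphism; this is rather a consequence of the finest-model theory of \cite{bco11} and is stated without proof in the paper's introduction. But the conclusion you draw (that $J(f)$ is a Jordan curve and $[f]\in\phd_3$) is exactly what the paper asserts and uses.

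The genuine difference is in the uniqueness step. The paper handles ``there is no second non-repelling periodic point'' in one line by invoking Kiwi's result \cite{Ki}: two Cremer or Siegel periodic points are always separated in $J(f)$ by a repelling periodic cutpoint, which you have already excluded. Your route is a direct ray-counting argument: since there are no parabolic points, $f^{\circ p}(z)-z$ has only simple roots, so for $p\ge 2$ the M\"obius count $\frac{1}{p}\sum_{d\mid p}\mu(p/d)3^d$ agrees for $\si_3$-orbits and $f$-cycles, and the one-ray-per-point constraint forces every such cycle to be repelling; the $p=1$ discrepancy $3-2=1$ isolates the unique Cremer/Siegel fixed point. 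This is more elementary in that it avoids the machinery behind \cite{Ki}, at the cost of a slightly longer combinatorial computation. Both arguments are clean; yours has the advantage of being self-contained within the paper's toolkit.
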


\begin{proof}
We may assume that $\sim_f$ consists of one class coinciding with
$\uc$. By definition of $\lam_f$ the map $f$ has no repelling
periodic cutpoints, and by Theorem~\ref{t:dynproje}, the polynomial $f$ has no
attracting or parabolic periodic points. By Lemma~\ref{l:4.4} the
point $0$ is a fixed Cremer or Siegel point. Suppose that there is a
non-repelling periodic point $x\ne 0$ of $f$. Similar to the above
$x$ is also a Cremer or a Siegel periodic point. Then by \cite{Ki}
there exists a repelling periodic point separating $x$ and $0$,
a contradiction.
\end{proof}

%In the next lemma we prove a part of Theorem C.

\begin{lem}\label{l:lamnotlam}
If $[f]\in \cu$ then $\lam_{\sim_f}=\lam_f$.
\end{lem}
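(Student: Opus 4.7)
The plan is to verify that the construction of $\lam_f$ from $\lam_{\sim_f}$ described at the end of Section~\ref{s:lamisec} adds no new leaves when $[f]\in\cu$. That construction augments $\lam_{\sim_f}$ only by the edges of $\ch(\ar_f(x))$ for periodic repelling or parabolic cutpoints $x$ of $J(f)$ lying in an infinite fiber of $\vp_f$, together with all iterated pullbacks and limit leaves. If no such $x$ exists, the pullback and limit steps are vacuous too, so it suffices to rule out the existence of any such $x$.

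First I would dispose of the repelling case, which is immediate: condition (2) of Definition~\ref{d:maincu} outright forbids repelling periodic cutpoints of $J(f)$.

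Next comes the parabolic case, which is the substantive step. Let $x$ be a parabolic periodic cutpoint of $J(f)$ lying in an infinite fiber $F=\vp_f^{-1}(z)$. Since $\vp_f$ semiconjugates $f$ with $f_{\sim_f}$ and $x$ is $f$-periodic, the image $z$ is $f_{\sim_f}$-periodic, so $F$ is a periodic CS-fiber. By Lemma~\ref{l:4.4}, $F$ contains a Cremer or Siegel periodic point $y$, whose multiplier $e^{2\pi i\rho}$ (with $\rho$ irrational) is different from $1$. By condition (3) of Definition~\ref{d:maincu} this forces $y$ to be the distinguished non-repelling fixed point of $f$. The parabolic cutpoint $x$ has multiplier $1$, so $x\ne y$, and the continuum $F$ contains two distinct non-repelling periodic points of $f$. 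I would then invoke Kiwi's separation theorem \cite{Ki} (the same tool used at the end of Lemma~\ref{l:emptylam}) to produce a repelling periodic cutpoint of $J(f)$ separating $x$ from $y$, contradicting condition (2) of Definition~\ref{d:maincu}.

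The main obstacle is the parabolic step; once the cubioid conditions (2) and (3) are combined with Lemma~\ref{l:4.4}, the contradiction is clean, but one must verify that Kiwi's separation theorem applies in the mixed configuration of a parabolic cycle alongside a Cremer/Siegel fixed point, rather than only for two irrationally neutral cycles as used inside Lemma~\ref{l:emptylam}. I expect this to follow directly from the rational-lamination framework of \cite{kiwi97}, which treats all non-repelling cycles on equal footing.
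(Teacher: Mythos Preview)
Your overall strategy---reduce to the parabolic cutpoint case via condition~(2), then use Lemma~\ref{l:4.4} to place a Cremer/Siegel point $y$ in the same CS-fiber and derive a contradiction---is sound and considerably more direct than the paper's argument. However, the step where you invoke \cite{Ki} to separate the parabolic point $x$ from the Cremer/Siegel point $y$ is a genuine gap, not just a detail to be checked. The paper itself never applies \cite{Ki} when one of the two points is parabolic: in Lemma~\ref{l:emptylam} both points are irrationally neutral, and in the later Lemma~\ref{l:cuciscu} the cases ``$x$ attracting, Cremer or Siegel'' are handled via \cite{Ki} while the parabolic case is treated by a separate ad hoc argument about Fatou domains (and that argument already uses the cubioidal hypothesis, which is not available here). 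Your expectation that the rational-lamination framework of \cite{kiwi97} ``treats all non-repelling cycles on equal footing'' does not by itself yield a repelling separating cutpoint: a priori the separating periodic point produced by such arguments could lie in the orbit of $x$ itself, which is parabolic, giving no contradiction with condition~(2).

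The paper's proof avoids this issue by a completely different mechanism. After localizing to the degree-2 invariant gap $G$ associated with the CS-fiber, it pushes the added leaves at $x$ forward to a $\si_2$-invariant geo-lamination, invokes Thurston's structure theory (Proposition~II.6.10b and Theorem~II.5.3 of \cite{thu85}) to extract an invariant finite rotational gap $H$ (or the leaf $\ol{\frac13\frac23}$), and then pulls $H$ back to $H_1\subset G$. Using accumulation of pullbacks of the cut at $x$ together with \cite[Theorem~7.7]{BOPT-QL} and \cite[Lemma~37]{bco11}, it shows the impressions at the vertices of $H_1$ coalesce into a single $f$-fixed point $K$, necessarily parabolic (by condition~(2)) with nonzero rotation number, hence multiplier $\ne 1$. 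This gives two periodic points of multiplier $\ne 1$ (namely $K$ and the Cremer/Siegel point $0$), contradicting condition~(3). So the paper's contradiction is with condition~(3), not condition~(2), and it is obtained through the laminational structure rather than a separation theorem.
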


Before we prove Lemma \ref{l:lamnotlam}, we need to recall a
description of quadratic invariant gaps given in \cite{BOPT}. Let
$G$ be a degree 2 invariant gap of some $\si_3$-invariant
lamination. Then there is a unique edge $M$ of $G$ (the \emph{major}
of $G$) separating the circle into two arcs, one of which contains
all vertices of $G$ and is of length at most $\frac23$; $M$ must be
a critical leaf or a periodic leaf. Moreover, all edges of $G$ are
iterated $\si_3$-preimages of $M$.\ Suppose that a quadratic
invariant gap $G$ is a gap of $\sim_f$. By \cite[Theorem
7.7]{BOPT-QL}, if $M=\ol{\ta_1\ta_2}$ is a periodic major of $G$,
then the external rays $R_f(\ta_1)$, $R_f(\ta_2)$ land at the same
point. This implies that if $\ol{\al\be}$ is a (pre)periodic edge of
$G$ then the external rays $R_f(\al)$, $R_f(\be)$ land at the same
point.

\begin{proof}[Proof of Lemma \ref{l:lamnotlam}] Suppose that $[f]\in \cu$ and
$\lam_{\sim_f}\ne \lam_f$. Then $f$ has a periodic CS-fiber $F$. By
Lemma~\ref{l:4.4} there exists a Cremer or Siegel periodic point
$y\in F$. If $F$ is not invariant then $y$ is not fixed
contradicting Definition~\ref{d:maincu}. Thus $F$ is invariant, and
we may assume that $y=0\in F$ is a fixed Cremer or Siegel point. As
above, by \cite{bl02} the corresponding to $F$ invariant Fatou gap
$G$ is of degree greater than 1. If $G$ is of degree 3 then $G=\uc$.
Since $[f]\in \cu$, the map $f$ does not have repelling periodic cutpoints.
Since $G=\uc$, then by Theorem~\ref{t:dynproje} the map $f$ cannot have
parabolic periodic points. Hence by definition in this case all
leaves of $\lam_f=\lam_{\sim_f}$ are degenerate.
Assume that $G$ is of degree 2. Since
$[f]\in \cu$ has no repelling cutpoints, then
$\lam_{\sim_f}\ne \lam_f$ implies that there is a parabolic periodic
cutpoint $x$ of $F$. Since $f$ is cubic, by the Fatou-Shishikura
inequality the union of the orbit of $x$ and the point $0$ is the set of
all non-repelling periodic points of $f$. In particular, there are no
other periodic cutpoints of $F$.

Let $\widetilde X$ be the union of all rays landing at $x$ and
$\{x\}$ itself. Some edges of the convex hull $X$ of $\ar_f(x)$ are
contained inside $G$ (otherwise $x$ would not be a cutpoint of $F$).
Apply the map $\vp_G$ which collapses to points all edges of $G$. It
semiconjugates $\si_3|_G$ to $\si_2$ so that the restriction of
$\lam_f$ onto $G$ induces a $\si_2$-invariant geo-lamination
$\lam^2_f$ which contains, by the above, some periodic leaves. By
Proposition II.6.10b of \cite{thu85}, the lamination $\lam^2_f$ has an invariant
gap $H$ of non-zero rotation number or the leaf
$H=\ol{\frac13\frac23}$. Theorem II.5.3 of \cite{thu85} shows that
if $H$ is a gap then either $H$ is a Siegel gap, or it is a gap with
countably many vertices, or it is a finite gap. However in the first
two cases it follows that the lamination $\lam^2_f$ contains an
isolated critical leaf. On the other hand, the construction of
$\lam^2_f$ implies that all non-degenerate leaves of $\lam^2_f$ are
either (pre)periodic with non-degenerate images, or limits of
(pre)periodic, a contradiction. Thus, either
$H=\ol{\frac13\frac23}$, or $H$ is a finite gap of rational rotation
number.

Consider the convex hull $H_1$ of $\vp_G^{-1}(H')$. Then $H_1$ has
either the same number of vertices as $H$, or twice as many vertices
as $H$ (if vertices of $H$ are $\vp_G$-images of edges of $G$). We
want to prove that there is an $f$-fixed point associated to $H_1$
such that external rays of $f$ whose arguments are vertices of $H_1$
land at that point. Indeed, suppose otherwise. Then by definition of
our laminations we may assume that there are $\si_2$-pullbacks of
$\vp_G(X)$ accumulating on each edge of $H$. Let $\ell=\ol{ab}$ be
an edge of $H$. Then the corresponding $\si_3$-pullbacks of $X$ will
accumulate on the corresponding edge of $\ol{a_1b_1}$ of $H_1$. The
corresponding cuts of $F$ formed by the corresponding pullbacks of
$\widetilde X$ can be chosen so that their ``vertices'' (i.e.,
corresponding pullbacks of $x$) converge to a point $y_\ell$
belonging to the impression of $R_f(a_1)$ and the impression of
$R_f(b_1)$. Thus, impressions of $R_f(a_1)$ and $R_f(b_1)$ are
non-disjoint.

If $H_1$ and $H$ have the same number of vertices, it follows that
the union $K$ of all impressions of angles with arguments which are
vertices of $H_1$ is a continuum. If $H_1$ has twice as many
vertices as $H$, for every vertex $l$ of $H$ there is an edge
$\ell=\ol{uv}$ of $H_1$ such that $\vp_G(\ell)=l$. By \cite[Theorem
7.7]{BOPT-QL}, the external rays $R_f(u)$, $R_f(v)$ land at the same
point. Hence in that case the union $K$ of impressions of angles
which are vertices of $H_1$ is a continuum too. Clearly, $K$ is
invariant and separated from impressions of rays with arguments
which are not vertices of $H_1$ (either by the just discussed
pullbacks of $\widetilde X$, or by the appropriate fibers
approaching $F$). By \cite[Lemma 37]{bco11} then $K$ is a fixed
repelling or parabolic point. Since $[f]\in \cu$, $K$ is parabolic.
Since $H$, and hence $H_1$, are of non-zero rotation number, the
multiplier at $K$ is not one. On the other hand, $x$ is a Cremer or
Siegel point. Thus, $f$ has at least two periodic points of
multiplier not equal to 1, a contradiction with $[f]\in \cu$. This
shows that $\lam_{\sim_f}=\lam_f$.
\end{proof}

\begin{proof}[Proof of Theorem C]
In view of Lemma \ref{l:lamnotlam}, it remains to prove that, for
$[f]\in\cu$, the lamination $\lam_f=\lam_{\sim_f}$ is cubioidal. Let
us prove that $\lam_f$ has at most one rotational set $G$, and $G$
is invariant. Suppose that $G'$ is a finite $\sim_f$-class. Then, by
Theorem~\ref{t:dynproje}, it corresponds to a periodic repelling or
parabolic cutpoint $y(G)=y$ of $J(f)$. Since $[f]\in \cu$, then, by
Definition~\ref{d:maincu}(2), the point $y$ is parabolic and by
Definition~\ref{d:maincu}(3) $y=0$. Hence $\lam_f$ cannot have two
finite rotational classes. Now, if $G$ is a Siegel gap of $\sim_f$
then there must exist a Siegel periodic point $y$ of $f$ inside
$\vp_f^{-1}\circ p_{\sim_f}(G)$; thus, $y=0$. Hence $\lam_f$ has at
most one rotational set $G$, and $G$ is invariant.

Again, let $G$ be a finite rotational $\sim_f$-class. Since $y(G)=y$
is a cutpoint of $J(f)$, by Definition~\ref{d:maincu}(2), the point
$y$ is parabolic. Hence there are parabolic domains attached to $y$.
By Theorem~\ref{t:dynproje} they are not collapsed by $\vp_f$. Hence
along at least one cycle of edges of $G$ such that the period of the
endpoints of these edges is, say, $m$, there are Fatou gaps of
period $m$ attached to $G$ and which do not correspond to one
$\sim_f$-class as required in Definition~\ref{d:cubioid}. By
\cite[Corollary 5.5]{BOPT} this implies that for every periodic leaf
$\ell$ of $\lam_f$ whose endpoints are of period $t$ there exists a
Fatou gap of $\lam_f$ of period $t$ attached to $\ell$. It remains
to prove that such gaps cannot be contained in convex hulls of
$\sim_f$-classes. %Observe that since we showed that
%$\lam_{\sim_f}=\lam_f$, instead of talking about gaps contained in
%convex hulls of $\sim_f$-classes we can talk about gaps whose
%boundaries consist of subsets of convex hulls of $\sim_f$-classes.

By the above the only hypothetical situation which we need to
consider is as follows: there is a periodic finite gap or leaf $G$
of $\lam_f$ with two cycles of edges on its boundary such that Fatou
gaps which \emph{are not} convex hulls of a single $\sim_f$-class are
attached to one of these cycles of edges while Fatou gaps which
\emph{are} convex hulls of a single $\sim_f$-class are attached to the
other cycle of edges. Denote by $H$ a Fatou gap which is one
$\sim_f$-class attached to an edge of $G$; let $F$ be the
corresponding CS-fiber. By Lemma~\ref{l:4.4} there is a Cremer or
Siegel point $x\in F$. Since $[f]\in \cu$, the point $x=0$ is fixed and so $H$
is invariant. Clearly, the only way it can happen is when
$G=\ol{0\frac12}$, a contradiction since if $\ol{0\frac12}$ is a
leaf of $\lam_f$ then from at least one side it has an attached Fatou
gap which does not coincide with the convex hull of a
$\sim_f$-class as desired (so that $\lam_f$ is a CU-lamination).
\end{proof}

We can partially reverse Theorem C. First we prove
Lemma~\ref{l:norepcut}.

\begin{lem}\label{l:norepcut}
If a cubic polynomial $f$ has no repelling cutpoints then it has a
non-repelling fixed point.
\end{lem}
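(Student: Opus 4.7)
The plan is to argue by contradiction: assume all three finite fixed points of $f$ are repelling and derive that at least one of them must be a cutpoint of $J(f)$. I will work under the (tacit) assumption that $J(f)$ is connected, as is the convention throughout the paper; a cubic polynomial has exactly three finite fixed points, counted with multiplicity.

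First I would dispose of the non-simple case. If some root of $f(z)-z$ has multiplicity $\ge 2$, then its multiplier equals $1$, so that fixed point is parabolic (hence non-repelling) and we are done. So from now on assume the three fixed points $z_1,z_2,z_3$ are simple and repelling. By the classical landing theorem of Douady--Hubbard--Yoccoz (which is the same ingredient as Lemma~\ref{l:rep}), each $z_i$ receives at least one periodic external ray, and the set $\ar_f(z_i)$ of arguments of rays landing at $z_i$ is a finite $\si_3$-invariant set on which $\si_3$ acts (up to cyclic order) as a rigid rotation by the combinatorial rotation number $p_i/q_i$. Consequently, $|\ar_f(z_i)|$ is a positive multiple of $q_i$.

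Now split into two cases for each $z_i$. If $q_i\ge 2$, then $|\ar_f(z_i)|\ge 2$ and the rays landing at $z_i$ together with $z_i$ separate the plane so that each sector meets $J(f)$; hence $z_i$ is a cutpoint of $J(f)$, contradicting the hypothesis. Therefore every $z_i$ must have rotation number $0$, which forces every ray landing at $z_i$ to be $\si_3$-fixed, i.e.\ to have argument in the two-element set $\{0,\tfrac12\}$. If two fixed rays were to land at the same $z_i$, then again $z_i$ would be a cutpoint, contradicting the hypothesis. Thus each $z_i$ receives exactly one fixed ray.

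For the contradiction, note that rays landing at distinct fixed points are disjoint, so the map $z_i\mapsto\ar_f(z_i)$ assigns to the three distinct fixed points three pairwise disjoint one-element subsets of $\{0,\tfrac12\}$. This is impossible, since a two-element set cannot contain three pairwise disjoint non-empty subsets. Therefore at least one $z_i$ fails to be repelling, proving the lemma. The only delicate point is making sure the landing theorem is applied correctly and that the rotation-number/ray-count dichotomy genuinely forces a cutpoint when $q_i\ge 2$ or when more than one fixed ray lands; beyond that the argument is essentially a pigeonhole count of fixed rays ($d-1=2$) against fixed points ($d=3$).
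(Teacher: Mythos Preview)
Your proof is correct and takes a genuinely different route from the paper's. The paper argues directly with the two $\si_3$-fixed rays $R_f(0)$, $R_f(\tfrac12)$: if they share a landing point, that point is a non-repelling fixed cutpoint by hypothesis; if they land at distinct points $y\ne z$, the paper invokes a result of Goldberg--Milnor \cite{GM} to produce either an invariant Fatou domain (yielding an attracting or Siegel fixed point) or a third fixed point $x\in J(f)\sm\{y,z\}$, and then shows case by case that $x$ is parabolic or Cremer. Your argument bypasses the Goldberg--Milnor input: assuming all three fixed points are repelling, the landing theorem together with the no-cutpoint hypothesis forces each to receive exactly one $\si_3$-fixed ray, and the pigeonhole count (three fixed points against two fixed arguments) gives the contradiction. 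Your approach is thus more self-contained; the paper's, via \cite{GM}, has the side benefit of identifying the non-repelling fixed point explicitly as attracting, Siegel, parabolic, or Cremer, though that extra information is not used later. One minor quibble: Lemma~\ref{l:rep} concerns \emph{stability} of landing under perturbation, not existence; the result you actually invoke is the separate Douady--Hubbard landing theorem, so your parenthetical reference is slightly off.
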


\begin{proof}
Consider fixed external rays $R_f(0)$ and $R_f(1/2)$. If they land
at the same point $w$ then by the assumptions $w$ is non-repelling
as desired. Suppose that the ray $R_f(0)$ lands at $z$, the ray
$R_f(1/2)$ lands at $y$, and $z\ne y$. By \cite{GM} there exists
either an invariant Fatou domain $U$ or a fixed point $x\in
J(f)\sm\{y, z\}$. In the first case $f$ has either an attracting or
a Siegel fixed point, and we are done. In the second case there are
two possibilities. First, a periodic ray $R$ may land at $x$. By the
assumption about $R_f(0), R_f(\frac12)$ the ray $R$ is not
invariant, hence $x$ is a cutpoint. Since $f$ does not have
repelling periodic cutpoints, $x$ is parabolic and we are done.
Second, suppose that no periodic ray lands at $x$. Then $x$ is a
Cremer fixed point, and we are done.
\end{proof}

\begin{lem}\label{l:cuciscu}
Suppose that $(\sim_f, \lam_f)$ is a cubioidal laminational pair,
$f$ has no repelling periodic cutpoints and at most one periodic attracting
point. Then $[f]\in \cu$.
\end{lem}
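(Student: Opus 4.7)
The plan is to verify each of the three conditions (1)--(3) of Definition~\ref{d:maincu} for $f$. Condition (2) is immediate from the no-repelling-cutpoints hypothesis, and condition (1) follows at once from Lemma~\ref{l:norepcut}; let $\alpha$ denote the resulting non-repelling fixed point. The substantive content is therefore condition (3).

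To establish (3), I would argue by contradiction, assuming that the set $N$ of non-repelling periodic points of $f$ of multiplier $\ne 1$ is not contained in a single fixed point. By the Fatou--Shishikura inequality, $f$ has at most two non-repelling cycles. First I would treat the case that some cycle meeting $N$ has period $k\ge 2$, splitting by type. An attracting cycle of period $k\ge 2$ yields $k$ distinct attracting periodic points, violating the hypothesis. A Siegel cycle of period $k$ corresponds to $k$ distinct Siegel gaps of $\sim_f$, and a parabolic-with-multiplier-$\ne 1$ cycle of period $k$ corresponds to $k$ distinct finite rotational $\sim_f$-classes; either possibility produces at least two rotational sets, contradicting the cubioidal condition on $\sim_f$. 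The remaining sub-possibility, a Cremer cycle of period $k\ge 2$, produces two distinct Cremer periodic points, and by the separation result of \cite{Ki} (applied in the same form as in the proof of Lemma~\ref{l:emptylam}) they must be separated by a repelling periodic cutpoint, again contradicting the hypothesis.

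It would remain to handle the case that every cycle in $N$ is a fixed point, yet at least two such fixed points $\alpha,\beta$ exist. By Fatou--Shishikura, $\{\alpha\}$ and $\{\beta\}$ exhaust the non-repelling cycles of the cubic $f$, so the third fixed point $\gamma$ of $f$ is repelling. The two fixed external rays $R_f(0)$ and $R_f(1/2)$ must land at fixed points of $f$, and I would verify that neither can land at $\alpha$ or $\beta$: attracting or Siegel fixed points lie in the Fatou set and admit no landing external rays; Cremer fixed points admit no landing periodic ray by a classical theorem; and a parabolic fixed point of multiplier $e^{2\pi i p/q}$ with $q\ge 2$ has only rays of period $q\ge 2$ landing at it. Consequently both $R_f(0)$ and $R_f(1/2)$ land at $\gamma$, making $\gamma$ a repelling periodic cutpoint and contradicting the hypothesis.

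The main obstacle lies in the Cremer-cycle subcase of the first case: the cubioidal conditions alone do not rule out Cremer cycles of period $\ge 2$, so this subcase must be excluded via the dynamical separation theorem of \cite{Ki} in the form that yields a repelling periodic cutpoint between two distinct Cremer periodic points. Provided this result is available in the needed form (as it is used in Lemma~\ref{l:emptylam}), the remaining verifications, which rest on the rotational-set count from the cubioidal hypothesis and the elementary fixed-rays argument, are routine.
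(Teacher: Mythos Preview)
Your overall plan is sound and your Case~2 argument (two non-repelling fixed points with multiplier $\ne 1$, forcing both fixed rays to land at the third, repelling, fixed point) is correct and rather elegant; the paper does not isolate this case and instead folds it into the general case analysis on the type of the designated fixed point.

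There is, however, a genuine gap in your Case~1. The assertions that a Siegel cycle of period $k\ge 2$ yields $k$ Siegel gaps of $\sim_f$, and that a parabolic cycle of period $k\ge 2$ with multiplier $\ne 1$ yields $k$ finite rotational $\sim_f$-classes, are not justified and need not hold. The finest map $\vp_f$ can collapse a Siegel disk (its boundary lying in a single CS-fiber), in which case no Siegel gap of $\sim_f$ appears; and a parabolic periodic cutpoint can lie inside a CS-fiber (corresponding to an infinite gap-class, a Fatou gap of degree $>1$), in which case it does not give a finite rotational $\sim_f$-class. In either scenario the ``at most one rotational set'' clause of the cubioidal definition is not violated, so your counting argument does not produce a contradiction. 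The cubioidal hypothesis $\lam_{\sim_f}=\lam_f$ only prevents \emph{additional} leaves from periodic cuts inside CS-fibers; it does not preclude the fibers themselves.

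The paper avoids this difficulty by organizing the argument differently: it fixes the non-repelling fixed point $0$ (chosen with multiplier $\ne 1$ if possible), takes a second non-repelling periodic point $x$ with multiplier $\ne 1$, and then cases on the type of $0$. When $x$ is Cremer or Siegel it invokes \cite{Ki} against $0$ (not against another point of the same cycle); when $x$ is parabolic it uses the cubioidal structure in a different way---namely, that the Fatou gaps attached to periodic leaves correspond to genuine Fatou \emph{domains} of $f$---to force either a third cycle of Fatou domains (impossible for a cubic) or a Fatou domain that can only be separated from $0$ by a repelling periodic cutpoint. Your rotational-set count does not substitute for this Fatou-domain analysis in the parabolic subcase.
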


\begin{proof}
By Lemma~\ref{l:norepcut} we may assume that $0$ is an $f$-fixed
point, $|f'(0)|\le 1$, and if there is a fixed non-repelling
point with multiplier not equal to $1$ then $f'(0)\ne 1$. By Definition~\ref{d:cubioid}, $\lam_{\sim_f}=\lam_f$. By
Definition~\ref{d:maincu} we need to show that all non-repelling
periodic points of $f$ but perhaps $0$ have multiplier $1$. Assume
the contrary: $f$ has a periodic non-repelling point $x\ne 0$, whose
multiplier is different from 1.

\begin{comment}
Thus, we may
assume that $f\in \Fc_{nr}$, $f(0)=0$, $0$ is non-repelling, and if
$f'(0)=1$ then the multiplier equals $1$ at each non-repelling fixed
point of $f$.
\end{comment}

\begin{comment}

Let us show that $\lam_{\sim_f}=\lam_f$. Indeed, suppose otherwise.
Then there exists a periodic CS-fiber $F$ associated to a Fatou gap
$G$ and a periodic parabolic or repelling point $x$ that cuts $F$.
By the assumptions $x$ is parabolic. Denote by $H$ the convex hull
of $\ar_f(x)$. By definition of a cubioidal pair at each edge $\ell$
of $H$ a Fatou gap $U(\ell)$ of degree greater than 1 is attached.
Moreover, by definition $U(\ell)$ corresponds to a Fatou domain
$\widetilde U(\ell)$ of $f$ of degree greater than 1. Now, choose an
edge $\ell_1$ of $H$ that is contained inside $G$. Since $G$
corresponds to a periodic CS-fiber $F$, the

existence of the gap $U(\ell_1)$ corresponding to a Fatou domain is
impossible, a contradiction. Thus, $\lam_{\sim_f}=\lam_f$.

\end{comment}

We need an observation concerning any parabolic point $y$ of $f$.
%(should it be $0$ or any point from the orbit of $x$).
By \cite{kiw02} either there is one cycle of rays
landing at $y$, or there are two cycles of rays landing at $y$. In
the first case inside each wedge at $y$ there is a parabolic Fatou
domain attached to $y$. In the second case \emph{a priori} it may
happen that there is one cycle of Fatou domains attached to $y$
inside one cycle of wedges at $y$, and the other cycle of wedges at
$y$ contains no Fatou domains attached to $y$ inside them. However
since $(\sim_f, \lam_f)$ is cubioidal, it
follows that if there are two cycles of rays (and hence wedges) at
$y$, then there are two cycles of Fatou domains at $y$. Now we can
consider several cases.

(1) Assume that $0$ is attracting. Then there is an invariant Fatou
domain $U$ containing $0$. If $x$ is attracting, Cremer or Siegel
then again by \cite{Ki} there exists a repelling periodic cutpoint,
a contradiction. Assume that $x$ is parabolic. Then the fact that
the multiplier at $x$ is not $1$ implies that $x$ cannot be a
boundary point of $U$. By the above there are two cases. First,
there may be one cycle of rays and one cycle of Fatou domains at
$x$. Clearly, then we can find a point from the orbit of $x$ and a
Fatou domain attached to it which can only be separated from $0$ by
a repelling periodic cutpoint, a contradiction. Second, there may be
two cycles of Fatou domains at $x$. Together with $U$ they will
form \emph{three} cycles of Fatou domains of a cubic polynomial $f$,
a contradiction.

(2) Assume that $0$ is Cremer or Siegel. If $x$ is attracting,
Cremer or Siegel then by \cite{Ki} there exists a repelling periodic
cutpoint separating $0$ and $x$ in $J(f)$, a contradiction. Suppose
that $x$ is parabolic. As in (1), the fact that $f$ is cubic implies
that there is exactly one cycle of Fatou domains at
$x$. However this implies that there will be one of Fatou domains at
one of the points of the orbit of $x$ which can only be separated
from $0$ by a repelling periodic cutpoint, a contradiction.

(3) Assume that $0$ is parabolic. By the above there are two
subcases here. First, assume that there are two cycles of Fatou
domains at $0$. Let $G$ be the convex hull of $\ar_f(0)$. If $G$ is
a gap, then each cycle of Fatou domains at $0$ consists of at least
two domains. If one of them is a cycle of attracting Fatou domains,
then we have at least two attracting periodic points of $f$, a
contradiction. If both are cycles of parabolic domains then clearly
we cannot have a non-repelling periodic point $x\ne 0$. Thus, we may
assume that $G$ is a leaf. Then having two cycles of Fatou domains
at $0$ (actually, each cycle in this case consists of just one Fatou
domain) means having two cycles of Fatou gaps attached to $G$ which
implies that $G=\ol{0\frac12}$. If both Fatou domains at $0$ are
parabolic, we cannot have a non-repelling periodic point $x\ne 0$.
Hence one of the Fatou domains at $0$ is attracting and the other
one is parabolic. However in that case by our choice of $f$ we
should have moved the attracting fixed point to $0$, a
contradiction.

Second, assume that there is one cycle of Fatou domains and one
cycle of rays landing at $0$. Then it is easy to see (similar to the
arguments above) that there must exist a repelling cutpoint
separating one of these Fatou domains at $0$ from a
specifically chosen Fatou domain at one of the points from the
orbit of $x$. In any case, we get a contradiction with the
assumption that $f$ has no repelling periodic cutpoints.
\end{proof}

%Thus, the assumption about the existence of a non-repelling periodic
%point $x$ with multiplier not equal to $1$ leads to a contradiction
%which finally proves the lemma.
%\end{proof}

\section{Proof of the second part of Theorem B}

We need to prove that $\lc\cap \cu=\lc\cap \ol\phd^e_3$ ($\lc$ is the set of classes of polynomials
with locally connected Julia sets). By the first part
of Theorem B $\ol\phd^e_3\subset \cu$. Hence we have to consider
cubic polynomials $f$ such that $[f]\in \cu\sm \ol\phd^e_3$. By
Theorem C, the laminational pair $(\sim_f,\lam_f)$ is cubioidal. We may
assume that $f\in \Fc_{nr}$.

\subsection{Main analytic tools}\label{ss:mt}

According to \cite{BOPT-QL}, there is a well-defined \emph{principal
critical point} $\om_1(f)$ of $f$ that depends holomorphically on
$f$ at least in a small neighborhood of $f$ in $\Fc_{nr}$. If
$\lambda=f'(0)$ is a root of unity, then $\om_1$ is in a parabolic
domain attached to $0$, in particular, the orbit of $\om_1(f)$
converges to $0$.
%We will need Theorem~\ref{t:mainql}.

\begin{thm}[\cite{BOPT-QL}, Theorem B]\label{t:mainql}
If $f\in\Fc_{nr}$ and $[f]\not\in\ol\phd_3^e$ then there are Jordan
domains $U^*$ and $V^*$ such that $f:U^*\to V^*$ is a quadratic-like
map hybrid equivalent to $z^2+c$ with $c\in\ol\phd_2$.
\begin{comment}
If $[f]\not\in\ol{\mathrm{PHD}}_3$ then one of the following holds:
\begin{enumerate}
\item The map $f:U\to V$ is a quadratic-like map with a connected Julia set
$J^*$ and filled Julia set $K^*$ for some $U$ and $V\ni 0$ so that
$f|_{J^*}$ is quasi-symmetrically conjugate to $h|_{J(h)}$ where $h$
is a quadratic polynomial with $[h]\in\phd_2$.
\item The polynomial $f$ has connected Julia set, has no repelling periodic cutpoints,
has no neutral periodic points distinct from $0$, and either a
critical point of $f$ is eventually mapped to a Siegel disk
containing $0$, or the Julia set of $f$ has positive measure and
carries an invariant line field.
\end{enumerate}
\end{comment}
\end{thm}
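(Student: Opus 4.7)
The plan is to establish the theorem by constructing a quadratic-like restriction of $f$ around its second critical point and then invoking the Douady--Hubbard straightening theorem. Since $f \in \Fc_{nr}$ has $0$ as a non-repelling fixed point, the principal critical point $\om_1(f)$ is always located in the central component at $0$ (attracting basin, parabolic basin, or Siegel disk); the hypothesis $[f] \notin \ol\phd_3^e$ is precisely what prevents $f$ from being deformed into $\phd_3$ through stable polynomials, so morally it forces the second critical point $\om_2(f)$ to be dynamically separated from $\om_1(f)$.

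Concretely, using the geo-lamination $\lam_f$ from Section~\ref{s:lamisec} together with the cubioidal structure provided by Theorem~C, I would select a system of periodic external rays landing at repelling or parabolic periodic cutpoints of $J(f)$ and combine them with equipotentials of the Green's function $G_f$ to bound nested Jordan domains $U^* \subset V^*$ with $\om_2(f)\in U^*$, $\om_1(f)\notin \overline{V^*}$, $\overline{U^*}\subset V^*$, and $f\colon U^*\to V^*$ proper of degree~$2$. Straightening then yields a hybrid equivalence between $f|_{U^*}$ and some quadratic polynomial $z^2+c$ with $c\in\Mc_2$. To verify $c\in\ol\phd_2$, I would argue that the filled quadratic-like Julia set $K^*=\bigcap_n f^{-n}(\overline{V^*})$ is either a Jordan disk or the closure of such a disk pinched at a single parabolic fixed point, reading this off from the cubioidal structure of $(\sim_f,\lam_f)$ restricted to the arc of $\uc$ corresponding to $U^*$, together with the restriction on non-repelling cycles imposed by the definition of $\cu$.

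I expect the main obstacle to lie in the ray-landing step in the delicate regimes where $0$ is Cremer, Siegel, or parabolic, since there external rays need not land controllably and Yoccoz-style puzzle constructions do not directly apply. Here one should exploit the structure of $\ol\phd_3^e$ as a union of $\la$-stable domains: perturb $f$ within its stability component in $\Fc_\la$, where the landing pattern of rays is preserved by the equivariant holomorphic motion, perform the quadratic-like construction at the perturbation, and pass to the limit. The technically sensitive point is showing that the quadratic-like structure persists in the limit without collapsing to a degenerate case, which is where the hypothesis $[f] \notin \ol\phd_3^e$ --- interpreted as the failure of $f$ to be on the boundary of a stable component contained in $\ol\phd_3$ --- must be used decisively.
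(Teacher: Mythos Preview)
This theorem is not proved in the present paper at all: it is quoted verbatim from \cite{BOPT-QL} (their Theorem~B) and used as a black box in Section~\ref{ss:b2}. So there is no ``paper's own proof'' to compare against; the substance of the argument lives entirely in the companion paper on quadratic-like dynamics of cubic polynomials.

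That said, your sketch has two substantive problems. First, you have the roles of the critical points reversed. The quadratic-like map $f:U^*\to V^*$ is built around the non-repelling fixed point $0$ and the principal critical point $\om_1(f)$, with $\om_2(f)$ \emph{outside} $\ol{V^*}$; this is why the straightened polynomial $z^2+c$ inherits a non-repelling fixed point and hence $c\in\ol\phd_2$. Your domains, with $\om_2(f)\in U^*$ and $\om_1(f)\notin\ol{V^*}$, would (if they existed) straighten to a quadratic with no reason to lie in $\ol\phd_2$. Second, you invoke Theorem~C and the cubioidal structure of $(\sim_f,\lam_f)$, but Theorem~C has hypothesis $[f]\in\cu$, which is \emph{not} assumed in Theorem~\ref{t:mainql}. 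Worse, in this paper Theorem~\ref{t:mainql} is an input to the proof of the second half of Theorem~B, which is precisely where the cubioidal structure is exploited; routing the argument through Theorem~C here would be circular in the overall logic. The actual construction in \cite{BOPT-QL} proceeds by analytic means (perturbation and holomorphic motions within $\Fc_\la$, control of the immediate basin or petal structure at $0$) rather than via the combinatorial/laminational machinery of this paper.
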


\begin{comment}
Consider first the case when $f$ has no Cremer or Siegel periodic
points. Then $f$ is renormalizable but not twice renormalizable,
Then by \cite{KvS} $J(f)$ is locally connected.
\end{comment}
We will write $J^*$ for the Julia set of the quadratic-like map
$f:U^*\to V^*$, and $K^*$ for the filled Julia set of this map.
Theorem~\ref{t:mainql} immediately implies that we may assume that
$\lam_f$ has some non-degenerate leaves.
Lemma~\ref{l:nodtypes} allows us to not consider some polynomials.

\begin{lem}[Corollary 4.2 \cite{BOPT-QL}]\label{l:nodtypes}
Suppose that one of the following holds for $f$:

\begin{enumerate}

\item $f$ has a fixed parabolic point at which two cycles of
parabolic Fatou domains are attached;

\item $f$ has a locally connected Julia set and an invariant
Siegel domain with two critical points on its boundary.

\end{enumerate}

Then $[f]\in \ol{\phd}_3$.
\end{lem}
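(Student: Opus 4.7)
The plan is to deduce both cases from the defining characterization of $\phd_3$: $[g]\in\phd_3$ iff $g$ has an attracting fixed point whose immediate basin contains both critical points of $g$. In each case I will exhibit a one-parameter family of perturbations $f_t\to f$ with $[f_t]\in\phd_3$, which gives $[f]\in\ol{\phd}_3$ upon taking the limit.

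For case (1), let $p_0$ be the fixed parabolic point carrying the two cycles of parabolic Fatou components, with multiplier $\la_0=e^{2\pi ip/q}$. By the Fatou-Shishikura inequality applied to the cubic $f$, the total number of cycles of non-repelling Fatou components is at most two, so these two parabolic cycles account for both critical orbits of $f$; in particular, both critical points are in the grand orbit of the parabolic basin of $p_0$. I would then apply standard parabolic bifurcation theory to choose a parameter path along which the analytic continuation of $p_0$ has multiplier sliding into $\D$ and, simultaneously, the Fatou cycles attached to $p_0$ collapse into the immediate basin of the (now attracting) continuation of $p_0$. For $q=1$ this is the naive perturbation $\la\to 1-\eps$ (illustrated concretely by $z+z^3\leadsto (1-\eps)z+z^3$, where both critical points $\pm i\sqrt{(1-\eps)/3}$ lie in the immediate basin of $0$). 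For $q>1$ one uses the bifurcation structure of hyperbolic components bordering $\la_0$ in $\Fc_\la$. In either case the resulting $f_t$ has an attracting fixed point whose immediate basin contains both critical points, so $[f_t]\in\phd_3$.

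For case (2), let $S$ be the invariant Siegel disk at a fixed point $p_0$ of $f$. Local connectivity of $J(f)$ forces $\bd(S)$ to be a Jordan curve, and by hypothesis it carries both critical points $\om_1,\om_2$. I would apply Shishikura's quasi-conformal surgery to replace the irrational rotation on $\ol S$ by an attracting Blaschke model fixing $p_0$, extended by the identity outside a neighborhood of $\ol S$. Straightening the resulting quasi-regular map produces a cubic polynomial $f_t$, and the Beltrami coefficient can be chosen with arbitrarily small $L^\infty$-norm so that $f_t\to f$ in $\Fc_\la$. The surgery is designed so that both critical points, which were on $\bd(S)$, land in the immediate basin of the (image of the) attracting fixed point, giving $[f_t]\in\phd_3$.

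The main obstacle in both cases is verifying that the perturbation drags both critical points into the immediate basin of the \emph{same} attracting fixed point rather than creating secondary attracting cycles. In case (1) this requires the correct choice of parameter path through the parabolic locus; in case (2) local connectivity of $J(f)$ is essential to guarantee that the surgery extends continuously and that the new basin inherits both critical points on its boundary.
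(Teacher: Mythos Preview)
The paper does not prove this lemma; it is cited as Corollary~4.2 of \cite{BOPT-QL}, so there is no in-paper argument to compare against. Your attempt is therefore a proposal for an independent proof, and it has genuine gaps in both cases.

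In case~(1), the phrase ``standard parabolic bifurcation theory'' hides the entire difficulty. A parabolic fixed point with multiplier $e^{2\pi ip/q}$ carrying \emph{two} cycles of petals is degenerate (in the notation of Proposition~\ref{P:Tpq}, $T_{p/q}(b)=0$, so $f^{\circ q}(z)-z$ vanishes to order at least $2q+1$). Under a generic perturbation such a point breaks up into a fixed point and a nearby $q$-cycle, and it is entirely possible for one critical point to follow the fixed point while the other follows the $q$-cycle, producing a polynomial with two attracting cycles rather than one. You explicitly name this as ``the main obstacle'' but then offer no mechanism to overcome it; the one concrete example you give ($z+z^3\leadsto(1-\eps)z+z^3$) is the non-degenerate $q=1$ case with a symmetry that forces both critical points into the same basin, and does not generalize. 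A correct argument has to exhibit a specific path in parameter space along which the degenerate parabolic point becomes a single attracting fixed point absorbing both critical orbits; this is the substantive content of the cited result.

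In case~(2), the surgery claim is wrong as stated. Replacing an irrational rotation on $\ol S$ by an attracting model is a change that is \emph{not} $C^0$-small on $\ol S$, and the Beltrami coefficient of the straightening is bounded away from zero; you cannot arrange it to have arbitrarily small $L^\infty$-norm. Shishikura's surgery produces \emph{some} polynomial in $\phd_3$, but gives no reason for it to be close to $f$. A workable approach must instead move in parameter space (e.g.\ perturb the multiplier $\la$ into the disk and control what happens to the two critical points), and that again requires a real argument about the structure of $\Fc_\la$ near the Siegel parameter---which is exactly what \cite{BOPT-QL} supplies and what your sketch does not.
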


%The idea of the proof of the second part of Theorem B is to
%observe that polynomials $f$ with locally connected Julia sets and
%cubioidal laminations do not allow for quadratic-like Julia sets
%inside $J(f)$.

%We need results of
%\cite{BOPT}.

\subsection{The description of cubioidal
laminations \cite{BOPT}}\label{ss:descub}

A \emph{stand-alone quadratic invariant gap $U$} is a quadratic
invariant gap $U$ of some lamination considered by itself
(without the lamination). \emph{Quadratic} means that
$\si_3$ on $U'=U\cap \uc$ is two-to-one (except
when $U$ has a \emph{critical} edge $\ol c$ and the point-image of
$\ol c$ has three preimages in $U'$; \emph{critical} means that the
endpoints $\ol c$ map to one point). When studying such a gap $U$,
an important role is played by the map $\psi_U:\bd(U)\to\uc$
collapsing all edges of $U$ and semi-conjugating the map
${\si_3}|_U$ with $\si_2$.

The gap $U$ has a unique edge $M$ such that the arc $H(M)$ (called
the \emph{major hole of $U=U_M$}), which is the component of $\uc\sm
M$ containing no points of $U'$, is of length at least $\frac13$ and
at most $\frac12$. Then $M$ is called the \emph{major $($leaf$)$} of
$U$ and all other edges of $U=U_M$ map to $M$ in finitely many
steps. The set $U'$ consists of all points of $\uc$ which never
enter $H(M)$. In particular, $M$ never enters $H(M)$.

%Moreover, major holes of distinct quadratic invariant
%gaps do not contain each other.

There are two types of majors of $U$. First, $U$ can be of
\emph{regular critical type}. Then $U$ has a \emph{critical} major
$M=\ol{\ta_1\ta_2}$. If a gap $U$ is of regular
critical type then there exists a unique lamination such that $U$ is
its gap. Basically, this lamination is obtained by taking pullbacks of $U$.
%and constructing a concatenation of its various pullbacks.
This lamination is called the \emph{canonical} lamination of the gap
$U$.

Also, $U$ can be of \emph{periodic type} with its major
$M=\ol{\ta_1\ta_2}$ being a periodic edge of $U$ of period $k$. Call
such $M$ a \emph{major $($leaf$)$ of periodic type}. Clearly,
$\si_3|_{H(M)}$ wraps around the circle while for every $i, 1\le
i\le k-1$ the map $\si_3$ is one-to-one on the circle arc with
endpoints $\si^{\circ i}(\ta_1)$, $\si^{\circ i}(\ta_2)$ disjoint
from $U$. Inside $H(M)$ there are points $\al$, $\be$ such that
$N=\ol{\al\be}$ has the same image as $M$. We call $N$ a
\emph{sibling leaf} of $M$ and construct a gap $V_M$ with edges $M$
and $N$ consisting of all $x$ such that for every $n\ge 0$, the point
$\si^{\circ n}_3(x)$ belongs to the closure of the arc with the same
endpoints as $\si^{\circ n}_3(M)$ disjoint from $U$. Then $V_M$ is a
quadratic gap of period $k$ (i.e., $\si^{\circ k}_3$ maps $V'_M$ onto itself
in a two-to-one fashion) called the \emph{vassal} gap of $U$. If
$M\ne \ol{0\frac12}$ then either $0\in H(M)$ is the only fixed angle
in $H(M)$ and the only angle which stays in $H(M)$ forever, or the
same holds for $\frac12$. By \cite{BOPT} there is a unique
lamination which has both $U$ and $V_M$ as its gaps. Similar to the
regular critical case, this lamination is obtained by taking pullbacks
of $U$ and $V_M$.
%and constructing a concatenation of their various
%pullbacks.
It is called the \emph{canonical}
lamination of the gap $U$. Regardless of the type of the gap $U$,
the corresponding canonical lamination is denoted by $\sim_U$ and
the corresponding geo-lamination is denoted by $\lam_U$.

\begin{dfn}\label{d:tunegap} A lamination $\lam$ \emph{tunes a
stand alone quadratic gap $U$ according to a quadratic lamination
$\lam_2$} if all edges of $U$ are leaves of $\lam$, and the map
$\psi_U$ transports the leaves of $\lam$ in $U$ to the leaves of
$\lam_2$. If $\lam$ and $\lam_1$ are two laminations and
$\lam\supset \lam_1$, say that $\lam$ \emph{tunes} $\lam_1$.
%establishes a one-to-one correspondence
%between the leaves of $\lam$ in $U$ and the leaves of $\lam_2$.
\end{dfn}

%The next definition suggests a weaker version of tuning.

\begin{dfn}\label{d:co-exist}
A lamination $\lam$ {\em coexists with} a stand-alone quadratic
invariant gap $U$ if every leaf of $\lam$ which intersects an edge
$\ell$ of $U$ in $\disk$ coincides with $\ell$. If the map $\psi_U$
transports the leaves of $\lam$ in $U$ to leaves of a quadratic
invariant lamination $\lam_2$, we say that $\lam$ \emph{weakly tunes
$U$ according to $\lam_2$}. A lamination $\lam$ {\em coexists} with
a lamination $\lam_1$ if no leaf of $\lam$ intersects a leaf of
$\lam_1$ in $\disk$ unless the two leaves coincide.
\end{dfn}

Theorem~\ref{t:cormin-spec} is the main result of \cite{BOPT}.

\begin{thm}
\label{t:cormin-spec} Let $\sim$ be a non-empty cubioidal
lamination. Then {\rm (1) or (2)} occurs $($below $U$ is an
invariant quadratic gap$)$.
\begin{enumerate}
\begin{comment}
\item The lamination $\sim$ is canonical for some
invariant quadratic gap.
\end{comment}

\item The lamination $\sim$ tunes the canonical lamination $\sim_U$
according to a quadratic lamination $\asymp$ from the Main Cardioid,
and if $U$ is of periodic type then the vassal gap $V(U)$ is a gap of
$\sim$.

\item The lamination $\sim$ coexists with the canonical lamination $\sim_U$
and weakly tunes $\sim_U$ on $U$ according to a quadratic lamination
$\asymp$ from the Main Cardioid so that edges of $U$ are not leaves
of $\sim$. Moreover, $U$ is of regular critical type.
\end{enumerate}
\end{thm}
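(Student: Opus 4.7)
The plan is to locate a canonical invariant quadratic gap $U$ associated with $\sim$, and then reduce the structural analysis of $\sim$ to the quadratic case via the semi-conjugacy $\psi_U$.

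First, I would identify an invariant quadratic gap $U$. Using $\si_3$-invariance, the total ``critical weight'' of $\sim$ (sum of $\deg(\si_3|_{G'})-1$ over critical gaps and $1$ for each critical leaf) equals $2$. The cubioidal constraint---every periodic non-degenerate leaf must have an attached Fatou gap whose basis is not one $\sim$-class, and at most one rotational set is permitted---forces this critical weight to be organized around a single quadratic structure. More precisely, I would enumerate the combinatorial possibilities and show that either (a) $\sim$ already has an invariant Fatou gap $U$ of degree $2$, or (b) a pair of critical objects of $\sim$ determines a stand-alone quadratic invariant gap $U$ that coexists with $\sim$ but whose edges are not leaves of $\sim$. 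Configurations with two independent critical orbits would produce two rotational sets or two incompatible Fatou structures, contradicting the cubioidal hypothesis.

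Second, I would dichotomize: case (1) is when edges of $U$ are leaves of $\sim$, and case (2) is when they are not. In case (2), $\sim$ coexists with the canonical lamination $\sim_U$ by definition. I would further argue that $U$ must then be of regular critical type: if $U$ were of periodic type, its periodic major $M$ would be a periodic leaf ``forced'' into $\sim$ by the attached-Fatou-gap condition (any leaf of $\sim$ approaching $M$ would have to accumulate on $M$ and, combined with the periodicity, eventually coincide with $M$), contradicting the assumption.

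Third, I would transfer the quadratic structure across $\psi_U$. Define $\asymp$ on $\uc$ by identifying $a, b$ iff the preimage chord in $\bd(U)$ (or a leaf of $\sim$ inside $U$ projecting to $\overline{ab}$) witnesses an equivalence. Then $\asymp$ is a $\si_2$-invariant lamination, and the cubioidal properties of $\sim$ translate directly: any rotational set of $\asymp$ of nonzero rotation number would pull back via $\psi_U$ to a rotational $\sim$-object distinct from the invariant structure of $U$, contradicting uniqueness; similarly the attached Fatou gap condition transfers to $\asymp$. Hence $\asymp$ lies in the Main Cardioid of $\Mc_2$, and in case (1) this is precisely the asserted tuning. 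In case (2), the same argument shows the weak tuning is by an $\asymp$ from the Main Cardioid. Finally, in case (1) with $U$ of periodic type, the cubioidal attached-Fatou-gap condition applied to the periodic major $M$ of $U$ pins down the attached gap on the far side of $M$ to be exactly the vassal $V(U)$ (the only degree-$2$ Fatou gap of the right period adjacent to $M$), giving the remaining claim.

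The main obstacle is the first step: producing $U$ and controlling its type. This requires a careful case analysis of how the critical weight of $\sim$ can be distributed in the presence of the cubioidal constraints, essentially a combinatorial Fatou--Shishikura bookkeeping. Ruling out ``dispersed'' critical configurations (two critical leaves that do not share a common invariant quadratic envelope, or a cubic gap that does not reduce to a quadratic situation) is where the full strength of the two cubioidal conditions is used. After $U$ is in hand, the transfer across $\psi_U$ and the verification of the vassal condition are relatively direct consequences of the structure of stand-alone quadratic invariant gaps recalled in Section~\ref{ss:descub}.
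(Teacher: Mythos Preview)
This theorem is not proved in the present paper; it is stated here as the main result of the companion paper \cite{BOPT} (``Laminations from the Main Cubioid''), and no argument is given. So there is no proof in this paper to compare your proposal against.

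That said, your outline has the right architecture: locate an invariant quadratic gap $U$, split on whether its edges are leaves of $\sim$, and push the remaining structure across $\psi_U$ to a quadratic lamination. Steps two and three are essentially what one expects, and your observation that in case (2) the gap $U$ must be of regular critical type (a periodic major would be forced into $\sim$) is the right kind of reasoning.

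The genuine gap is your first step, which you yourself flag as ``the main obstacle.'' The sentence ``I would enumerate the combinatorial possibilities and show that either (a)\dots or (b)\dots'' is a promise, not an argument. Producing $U$ from the cubioidal hypotheses is the substance of the theorem and is why it occupies a separate paper: one must control how the critical weight of $\sim$ is distributed, rule out a degree-$3$ invariant Fatou gap, rule out two critical leaves that do not sit inside a common quadratic envelope, and so on. Your sketch names these obstacles but gives no mechanism for eliminating them. Similarly, your claim that the attached gap on the far side of the periodic major $M$ is ``the only degree-$2$ Fatou gap of the right period adjacent to $M$'' skips over why that gap must be quadratic rather than of higher degree or degenerate; this again requires the cubioidal hypotheses in a nontrivial way. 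In short, your proposal is a reasonable table of contents for the proof in \cite{BOPT}, but the hard chapter is missing.
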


%Relying upon Theorem~\ref{t:cormin-spec} we prove the second part of
%Theorem B by separately considering several cases.

\subsection{The proof of the second part of Theorem B}\label{ss:b2}

Consider a cubic polynomial $f\in\Fc_{nr}$ with $[f]\in \cu\sm \ol\phd^e_3$. By
Theorem C, the lamination $\lam_f=\lam_{\sim_f}$ is cubioidal.
%We may assume that $f\in \Fc_{nr}$.
%If there is a non-repelling fixed point of $f$, whose multiplier is different from 1,
%then we can change coordinates moving this point to $0$.
We may assume that all non-repelling periodic points of $f$ but perhaps $0$
have multiplier 1.
\begin{comment}
We choose a fixed non-repelling point
to be translated to $0$ so that if there is at least one
non-repelling fixed point $y$ with multiplier not equal to $1$ then
$y=0$. In other words, we may assume that $f\in \Fc_{nr}$ is such
that all non-repelling periodic points of $f$ not equal to $0$ have
multiplier $1$.
\end{comment}

Let us define a special set $Q_f\subset K^*$ for the polynomial $f$.
If $0$ is parabolic, by Lemma~\ref{l:nodtypes}
there is a unique $f$-cycle $Q_f\subset K^*$ of Fatou domains at
$0$. By Theorem~\ref{t:mainql}, the map $f|_{J^*}$ is hybrid equivalent to
the appropriate quadratic polynomial $g$ with parabolic fixed point
$a_g$ in its quadratic Julia set $J(g)$. Under this conjugacy $Q_f$
maps to the $g$-cycle $Q_g$ of Fatou domains at $a_g$. Otherwise $0$
is a Siegel point (as Julia sets with Cremer points are
not locally connected). Then the closure $Q_f$ of the Siegel domain
at $0$ is contained in $K^*$.
Below, we consider the two cases of Theorem \ref{t:cormin-spec}.
We will assume that $U^*$ is sufficiently close to $K^*$ which
can be arranged by passing to a suitable pullback.
\begin{comment}
We are ready to consider several
cases; in doing so we will assume that by repeatedly pulling $U^*$
back to $K^*$ we may choose $U^*$ to be arbitrarily close to $K^*$.

\emph{Case (1) of Theorem~\ref{t:cormin-spec} (1)}. We may assume that
$|f'(0)|<1$ or $f'(0)=1$ and $0$ belongs to the boundary of an
invariant quadratic domain $\wt W$ associated to an invariant
quadratic gap $W$ of $\lam_f$. In either case it follows from
Theorem~\ref{t:mainql} that $K^*=\ol{\wt W}$, and so $\ol{\wt
W}\subset U^*$. However this is impossible in both periodic type and
regular critical case as in either case the set of all points which
never exit $U^*$ is greater than $\ol{\wt W}$ contradicting the
definition of the polynomial-like Julia set.
\end{comment}

\emph{Case (1) of Theorem~\ref{t:cormin-spec}}. Denote by $U$ the
quadratic invariant gap tuned by $\lam_f$. Let $M$ be the major of
$U$ and $t\in J(f)$ be the point corresponding to $M$. Then $t$ is
a critical point (in the regular critical case) or a periodic
cutpoint of $J(f)$ (in the periodic type case). As points of neither
type belong to $J(g)$, we have $t\notin K^*$. Take the union
$C$ of both external rays of
$f$ landing at $t$. Let $T$ be the
component of $\C\sm C$ containing $0$. Since $K^*$ is positively
distant from $C$, we may assume that $\ol{U^*}\subset T$. Choose
$x\in J^*$ and all its backward orbits inside $J^*$.
By the construction it is equivalent to considering all
backward orbits of $x$ contained in $T$. The union $B$ of these
backwards orbits is dense in $J^*$. On the other hand, by
\cite{BOPT} the set of arguments of external rays landing at points
of $B$ is dense in the basis $U'$ of the gap $U$. These arguments
will accumulate upon the arguments of external rays landing at $t$
which implies that their landing points converge to $t$, and hence
that $t\in J^*$, a contradiction.

\emph{Case (2) of Theorem~\ref{t:cormin-spec}}. In this case the
invariant quadratic gap $U$ weakly tuned by $\lam_f$ is of
regular critical type, the major $M$ of $U$ does not
belong to $\lam_f$ and hence is contained in a critical gap $G$ of
$\lam_f$. Consider several cases depending on $G$.

\emph{Case A: the gap $G$ is a periodic infinite gap}. %Let the period of $G$
%be $k$.
Consider the quadratic lamination $\asymp$ from the Main
Cardioid according to which $\lam_f$ weakly tunes $U$.
%Since some
%(pre)periodic leaves of $\lam_f$
First, assume that $\asymp$ has an invariant Siegel gap $T$ with a
critical edge corresponding to a critical leaf of $\lam_f$ which is
the second critical set of $\lam_f$. Then $G$ corresponds to $T$ and
is an invariant gap with a critical edge on which $\si_3$ is
two-to-one. Hence $\lam_f$ tunes $G$ and fits into case (1) of
Theorem~\ref{t:cormin-spec} considered above.

Second, assume that $\asymp$ corresponds to the lamination
of a polynomial $g$ from the Main Cardioid which has a parabolic
fixed point $a_g$. Then $\asymp$ has an invariant finite gap which
must correspond to a finite invariant gap $H$ of $\lam_f$. The
(pre)periodic edges of $G$ contained in $U$ will be eventually
mapped to edges of $H$; as $G$ is periodic, these leaves will also
remain edges of $G$. Thus, %$G$ and $H$ have common edges which
%implies that
$G$ ``rotates'' around $H$.

Suppose that $G$ is quadratic and consider the major $M\subset G$ of
$U$. The properties of quadratic maps imply that under the
appropriate power of $\si_3$ the image of $M$ will be separated from
$H$ by $M$ itself. This contradicts the properties of regular
critical leaves according to which $M$ never enters $H(M)$. Hence
$G$ is either cubic or is of degree 4 (in the latter case there must
exist another critical Fatou gap in the orbit of $G$). However in
both these cases it is easy to see that we have a contradiction with
Theorem~\ref{t:mainql}. Indeed, by Theorem~\ref{t:mainql} the fixed
point $0$ of $f$ corresponds to $a_g$, and hence the orbit of $G$
corresponds to the orbit of a Fatou domain of $g$ at $a_g$ which
must be quadratic.

\emph{Case B: the gap $G$ is a preperiodic infinite gap}.
%In this case our
%strategy is closer to that used in Case of
%Theorem~\ref{t:cormin-spec}(3).
There is a Fatou domain $\wt G$ of $f$ corresponding to $G$. Note
that $\wt G$ eventually maps to a periodic domain in $K^*$. Denote
by $t\in \bd(\wt G)$ the cutpoint of $J(f)$ which separates $\wt G$
and $Q_f$. If $t\notin K^*$ then we obtain a contradiction as in the
part of this proof corresponding to Case 1 of
Theorem~\ref{t:cormin-spec}. Otherwise
$t$ together with a small arc $I\subset \bd(\wt
G)$ around it eventually maps to $Q_f\subset K^*$. By
the definition of a polynomial-like Julia set $I\subset J^*$. On
one side of $I$ (namely, inside $\wt G$) there are no points of
$J(f)$, hence there are no points of $J^*$ on that side either.
Well-known properties of locally connected quadratic Julia sets
imply that then $I$ must be an arc from the boundary of a Fatou
domain of $J^*$. However the entire $\wt G$ cannot be contained in
$K^*$ because $K^*$ is quadratic-like, a contradiction.

\emph{Case C: the gap $G$ is a preperiodic finite gap}. Then $G$
corresponds to a critical point $t$ of $f$ whose image is still a
cutpoint of $J(f)$. Clearly, $t\notin J^*$ because $J^*$ is
quadratic-like. We may assume that $t\notin \ol{U^*}$. Then we
obtain a contradiction, repeating the arguments from the
part of this proof corresponding to Case 1 of
Theorem~\ref{t:cormin-spec}. \hfill\qed
%As this is the last remaining case, this completes the proof of the
%second part of Theorem B.

\end{document}